\numberwithin{equation}{section}
\newcommand{\mand}{\qquad\mbox{and}\qquad }
\newcommand{\fl}[1]{{\ensuremath{\left\lfloor #1 \right\rfloor}} }
\renewcommand{\(}{\ensuremath{\left(} }
\renewcommand{\)}{\ensuremath{\right)} }
\newcommand{\e}{\ensuremath{\mathbf{e}} }
\renewcommand{\le}{\ensuremath{\leqslant} }
\renewcommand{\ge}{\ensuremath{\geqslant} }
\renewcommand{\leq}{\ensuremath{\leqslant} }
\renewcommand{\geq}{\ensuremath{\geqslant} }
\newcommand{\ssum}[1]{\sum_{\substack{#1}}}  
\def\gdx{{G}}
\def\kdx{{k}}
\def\be{\begin{equation}}
\def\ee{\end{equation}}
\def\padic{\nu_q}
\def\ord{\mathop{\rm ord}\nolimits}
\newtheorem{X}{X}[section]
\newtheorem{theorem}[X]{Theorem}  
\newtheorem{lemma}[X]{Lemma}
\newtheorem{cor}[X]{Corollary}
\newtheorem{proposition}[X]{Proposition}
\newcommand{\cA}{\ensuremath{\mathcal{A}} }
\newcommand{\cH}{\ensuremath{\mathcal{H}} }
\newcommand{\cI}{\ensuremath{\mathcal{I}} }
\newcommand{\cJ}{\ensuremath{\mathcal{J}} }
\newcommand{\cM}{\ensuremath{\mathcal{M}} }
\newcommand{\cN}{\ensuremath{\mathcal{N}} }
\newcommand{\cS}{\ensuremath{\mathcal{S}} }
\newcommand{\cY}{\ensuremath{\mathcal{Y}} }
\newcommand{\cZ}{\ensuremath{\mathcal{Z}} }
\newcommand{\N}{\ensuremath{\mathbb{N}} }
\newcommand{\R}{\ensuremath{\mathbb{R}} }
\newcommand{\Z}{\ensuremath{\mathbb{Z}} }
\newcommand{\NN}{\ensuremath{\mathbb{N}} }
\def \balpha{\bm{\alpha}}
\def \bbeta{\bm{\beta}}
\def \bgamma{\bm{\gamma}}
\begin{document}

\title[On digits of  Mersenne numbers]
      {On digits of  Mersenne numbers}
      
%

      
\author[B.\ Kerr]{Bryce Kerr}
\address{Max Planck Institute for Mathematics, Bonn, Germany}
\email{bryce.kerr89@gmail.com}

\author[L.\ M\'erai]{L\'aszl\'o M\'erai}

\address{Johann Radon Institute for Computational and Applied Mathematics, 
Austrian Academy of Sciences,  
Altenberger Stra\ss e 69, A-4040 Linz, Austria}

\email{laszlo.merai@oeaw.ac.at}

\author[I.\ E.\ Shparlinski]{Igor E.\ Shparlinski} 

\address{Department of Pure Mathematics,
		 University of New South Wales,
		 Sydney, NSW 2052, Australia.}

\email{igor.shparlinski@unsw.edu.au}
        
\date{\today}

\begin{abstract}
Motivated by recently developed interest to the distribution of $q$-ary digits of Mersenne numbers $M_p = 2^p-1$, 
where $p$ is prime, we estimate 
rational exponential sums with $M_p$, $p \le X$, modulo a large power of a fixed odd prime $q$. In turn this immediately 
implies the normality of strings of $q$-ary digits amongst about $(\log X)^{3/2+o(1)}$ rightmost digits of   $M_p$, $p \le X$.
Previous results imply this only for about $(\log X)^{1+o(1)}$  rightmost  digits.
\end{abstract}

\keywords{Mersenne numbers, $q$-ary digits, exponential sums}
\subjclass[2010]{11A63, 11B83, 11L07}

\maketitle


\section{Introduction}

\subsection{Overview} 
Recently,   Cai, Faust,   Hildebrand,   Li and Zhang~\cite{CFHLZ} have   considered 
various questions on the patterns in leading $q$-ary digits of {\it Mersenne numbers\/} $M_p = 2^p-1$, 
where $p$ is prime, see also~\cite{CHL,HHLZ} for some other related questions. 
 In particular, 
one can find in~\cite{CFHLZ}  some numerical results which suggest the leftmost  $q$-ary digits 
of  Mersenne numbers obey the so-called {\it  Benford law}.  It has also been observed in~\cite[Remark~4.4 and Section~7]{CFHLZ} 
that the bounds of exponential sums with fractions $M_p/m$ for a large integer $m$ such as in~\cite{BCFS, BFGS}
 can be used to extract some nontrivial information about the distribution of   the rightmost digits of $M_p$.
This conclusion in~\cite{CFHLZ}  is based on bounds of exponential sums with an arbitrary modulus $m$.
However for the case $q$-ary digits only moduli of the form $m = q^\gamma$ with an integer $\gamma$
are of interest. Here we show that indeed for such moduli, using some ideas of Korobov~\cite{Kor} 
one can obtain much stronger results. To emphasise the ideas we consider the case when $q$ is  prime,
however there is no doubt that the method extends to any $q$ without too much loss in its power. 

For example,  our bounds of exponential sums immediately imply the following equidistribution results 
for $q$-ary digits of $M_p$. 
For any fixed real $\varepsilon> 0$ 
and for any positive integers $s\leq r \le (\log  X)^{3/2-\varepsilon}$, on
rightmost 
$q$-ary positions  $r, \ldots, r-s+1$ of $M_p$, $p \le X$,  any block of $q$-ary digits of length $s$ 
appears  asymptotically  the same number of times, 
that is $\left(q^{-s} + o(1)\right)\pi(X)$, where, as usual, $\pi(X)$ 
denotes the number of primes $p \le X$, see Theorem~\ref{thm:MersDig}.

The generic results of~\cite{BCFS, BFGS} imply this only for positions which are much closer to 
the right end, 
namely, only for $r \le c \log  X$ for some absolute constant $c>0$. 

Let $m$ be an arbitrary natural number, and let $a$ and $g$ be integers that
are coprime to $m$.  In this paper, we study exponential sums of the form
\be\label{eq:SmaN}
S_m(a;X)=\sum_{n\le X}\Lambda(n)\e_m(ag^n),
\ee 
where $\e_m$ is the additive character modulo $m$ defined by
$$
\e_m(t)=\exp(2\pi i t/m)\qquad(t\in\R),
$$
and $\Lambda$ is the \emph{von Mangoldt function}:
$$
\Lambda(n) =
\begin{cases}
\log p&\quad\text{if $n$ is a power of the prime $p$,}\\
 0 &\quad\text{otherwise.}
\end{cases}
$$

The sums~\eqref{eq:SmaN} are introduced in Banks \textit{et al}~\cite{BCFS},
where it is shown that
$$
\max_{(a,m)=1}
|S_m(a;X)|\le\(X\tau^{-11/32}m^{5/16}
+X^{5/6}\tau^{5/48}m^{7/24}\)X^{o(1)} 
$$
as $X \to \infty$, 
where  $\tau = \ord_m g$ denotes the multiplicative order of $g$ modulo $m$,
that is, the smallest natural number $k$ such that $g^k\equiv 1\bmod m$.

Using an idea of Garaev~\cite{Gar2} to handle double sums over certain
hyperbolic regions, the stronger bound
$$
\max_{(a,m)=1}|S_m(a;X)|\le\(X\tau^{-11/32}m^{5/16}
+X^{4/5}\tau^{1/8}m^{7/20}\)X^{o(1)}
$$
is established in Banks \textit{et al}~\cite{BFGS}. Note that, for
either of the above bounds to be nontrivial, one must have $\tau\ge m^{10/11}X^{o(1)}$
(to control the first term), hence also 
$$
m\le X^{22/51+o(1)}
$$
(to control the second), as $X\to \infty$.  For shorter sums, new ideas are needed.

In the present paper, we study the exponential sums $S_m(a;X)$
in the special case that $m=q^\gamma$ for some fixed prime $q$. Our aim is to
establish nontrivial bounds for short sums in which $X$ is smaller than the modulus $m$.
Our approach relies on an idea of Korobov~\cite{Kor} coupled with the use of
Vinogradov's mean value theorem in the explicit form given by Ford~\cite{Ford}. 

\subsection{Statement of results}

Since our main motivation comes from applications to Mersenne numbers, we always assume that $q \ge 3$, which simplifies the formulas in Section~\ref{sec:tech-lemmas}
(and can easily be avoided at the cost of some small typographical changes).  

\begin{theorem}\label{thm:main}
Fix a prime $q\ge 3$ and an integer $g\ge 2$ not divisible by $q$. Let $\gamma$ be a positive integer, $A>0$ be an arbitrary constant and suppose $X\geq 2$ satisfies
\be
\label{eq:mainNcond}
 X\le q^{A\gamma}.
\ee 
Then, for all integers $a$ with $\gcd(a,q)=1$,  we have 
$$
\left|S_{q^{\gamma}}(a;X)\right|  \leq c(g,q, A) \(X^{1-\delta(A)  \rho^2}\log X + Xq^{-\delta(A) \gamma}\),
$$
where $\delta(A) > 0$ is a constant depending only on $A$,
\be
\label{eq:delta rho}
 \rho =\frac{\log X}{\log q^\gamma} 
\ee 
and $c(g,q,A)$ depends only on $g$, $q$ and $A$.
\end{theorem}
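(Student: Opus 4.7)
The plan is to combine a combinatorial decomposition of the von Mangoldt weight with Korobov's device for replacing $g^n$ modulo $q^\gamma$ by a polynomial in an auxiliary variable, closing out with Vinogradov's mean value theorem in the explicit form of Ford to bound the resulting polynomial exponential sums.

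First, I would apply Vaughan's identity to $\Lambda(n)$ in order to decompose $S_{q^\gamma}(a;X)$ into $O(1)$ bilinear expressions of Type~I,
\[
\sum_{m \le M} \alpha_m \sum_{n \le X/m} \e_{q^\gamma}(a g^{mn}),
\]
and Type~II,
\[
\sum_{M < m \le 2M} \sum_{K < n \le 2K} \alpha_m \beta_n \e_{q^\gamma}(a g^{mn}),
\]
with bounded coefficient sequences and $M, K$ running over dyadic scales. After a Cauchy--Schwarz step in the Type~II piece (expanding the square and isolating a free inner variable in the exponent of $g$), both cases reduce to estimating pure exponential sums
\[
T(b,h;Y) = \sum_{t \le Y} \e_{q^\gamma}(b h^t), \qquad \gcd(bh,q) = 1,
\]
where $h$ is a suitable power of $g$.

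The core Korobov step treats $T(b,h;Y)$ as follows. Choose a small auxiliary parameter $\gamma_0$, let $\tau_0$ denote the multiplicative order of $h$ modulo $q^{\gamma_0}$, and write $h^{\tau_0} = 1 + q^{\gamma_0} u$ with $\gcd(u,q) = 1$. Substituting $t = \tau_0 s + r$ and Taylor-expanding the binomial power give
\[
h^{\tau_0 s + r} \equiv h^r \sum_{j=0}^{J} \binom{s}{j} q^{j \gamma_0} u^j \pmod{q^\gamma},
\]
and only finitely many $j \le J$ contribute modulo $q^\gamma$ after tracking the $q$-adic valuations of $j!$. Up to a $q$-adic unit, this expresses $h^t$ modulo $q^\gamma$ as a polynomial $P_r(s)$ of controlled degree with prescribed $q$-adic sizes on its coefficients, so that $T(b,h;Y)$ splits into $\tau_0$ polynomial exponential sums $\sum_s \e_{q^\gamma}(b P_r(s))$.

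Finally, I would invoke Vinogradov's mean value theorem in Ford's explicit form to bound each polynomial exponential sum; combined with the known $q$-adic structure of the coefficients of $P_r$, this yields a Weyl-type saving whose exponent translates, after matching parameters against $\rho = \log X / \log q^\gamma$, into the stated factor $X^{-\delta(A)\rho^2}$. The main obstacle will be the joint selection of the Vaughan cutoff $M$, the auxiliary $\gamma_0$, and the Taylor truncation $J$: these must be arranged so that (i) the leading coefficient of $P_r$ has a small enough $q$-adic valuation for Vinogradov's estimate to be nontrivial, (ii) the losses from the Vaughan decomposition and from Cauchy--Schwarz are more than compensated by the Korobov--Vinogradov saving, and (iii) the argument remains uniform across the whole range $\rho \in (0, A]$ imposed by \eqref{eq:mainNcond}. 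The complementary term $Xq^{-\delta(A)\gamma}$ in the bound handles boundary regimes where the Korobov expansion degenerates, via a direct completion-of-sum argument.
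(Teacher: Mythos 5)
Your high-level plan — Vaughan's identity to split into Type~I and Type~II, Korobov's binomial expansion to turn $g$-powers modulo $q^\gamma$ into polynomials, then Ford's explicit Vinogradov mean value theorem — is the correct skeleton, and it matches the paper's strategy. But there is a concrete structural gap in the Type~II reduction. After Cauchy--Schwarz in the bilinear piece you do \emph{not} obtain pure exponential sums of the form $T(b,h;Y)=\sum_{t\le Y}\e_{q^\gamma}(bh^t)$. What you actually get is
\[
\sum_{n_1,n_2}\Big|\sum_{m}\e_{q^\gamma}\bigl(a\bigl(g^{mn_1}-g^{mn_2}\bigr)\bigr)\Big|,
\]
and the argument $g^{mn_1}-g^{mn_2}=g^{mn_2}\bigl(g^{m(n_1-n_2)}-1\bigr)$ has a unit factor $g^{mn_2}$ that still varies with $m$, so it cannot be recast as $bh^m$ for fixed $b,h$. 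Consequently Korobov's expansion must be applied to $\lambda^m$ and $\mu^m$ (with $\lambda=g^{n_1}$, $\mu=g^{n_2}$) \emph{simultaneously}, and the real technical work — entirely absent from your sketch — is controlling the $q$-adic valuations of the resulting polynomial coefficients $\lambda^{x}u^j-\mu^{x}v^jq^{\Delta j}$: showing, via the elementary but delicate claim about consecutive indices $j,j+1$, that at least about $k/4$ of them have the minimal valuation $\overline\nu$, and bounding $\overline\nu\le 3s$ using the $q$-adic order results (Lemmas on $\padic(g^{mx}-g^{my})$). Without this, Vinogradov/Ford yields no saving. This $q$-adic bookkeeping is the heart of the paper's Proposition~\ref{lem:bilinear} and is where a naive reduction to $\sum\e(bh^t)$ breaks down.

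A secondary omission: you propose to bound single-variable polynomial sums $\sum_s\e_{q^\gamma}(bP_r(s))$ directly, but the Korobov-style lemma that interfaces with Vinogradov's mean value theorem here (the paper's Lemma~\ref{lem:Kor}) is stated for \emph{bilinear} polynomial sums with a product variable $xy$; one needs an extra completing/shifting step (the paper's Lemma~\ref{lem:sum-convert}) to manufacture that product structure from a single sum. Relatedly, the paper proves the bilinear bound (Proposition~\ref{lem:bilinear}) first and \emph{derives} the single-sum bound from it, whereas your proposal runs the dependency in the opposite direction; that inversion is harmless in principle but makes the uniform parameter bookkeeping — matching the Vaughan cutoffs, the choice of $s$, the degree $k\asymp\gamma/s\asymp\rho^{-1}$, and the range $r\in[2k^2,4k^2]$ in Ford — harder to organize, precisely because one no longer has the clean bilinear statement to quote in both the Type~I and Type~II branches.
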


We remark that Theorem~\ref{thm:main}   is nontrivial in the range
$$
q^{A \gamma} \geq X \geq q^{\gamma^{2/3+\varepsilon}},
$$
for an arbitrary small $\varepsilon > 0$, provided that $X$ is large enough, and 
with $g=2$ yields (via partial summation) a nontrivial
bound on exponential sums with  Mersenne numbers  $M_p=2^p-1$, $p$ prime.  

\begin{cor}
\label{cor:Mers}  
For a prime $q\ge 3$ and a real $X\geq 2$  satisfying~\eqref{eq:mainNcond}
we have $$
\max_{(a,q)=1}\left|\ssum{p\le X\\p\text{~prime}}\e_{q^\gamma}(aM_p)\right|
\leq c(q, A) \(X^{1-\delta_0(A)  \rho^2}   + Xq^{-\delta_0(A) \gamma}\), 
$$
where $\delta_0(A)> 0$ is a constant depending only on $A$, $\rho$ is as in~\eqref{eq:delta rho}
and $c(q,A)$ depends only on $q$ and $A$.  
\end{cor}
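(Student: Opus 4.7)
The plan is to deduce Corollary~\ref{cor:Mers} from Theorem~\ref{thm:main} applied with $g=2$, by a standard partial summation that passes from the von Mangoldt weighted sum $S_{q^\gamma}(a;X)$ to the sum over primes. First, from $M_p = 2^p - 1$ I get $\e_{q^\gamma}(aM_p) = \e_{q^\gamma}(-a)\,\e_{q^\gamma}(a\cdot 2^p)$, so up to a factor of modulus one it suffices to estimate
$$V(X) = \sum_{\substack{p\le X\\ p\text{ prime}}} \e_{q^\gamma}(a\cdot 2^p).$$

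Next I would introduce the weighted sum
$$U(t) = \sum_{\substack{p\le t\\ p\text{ prime}}} (\log p)\,\e_{q^\gamma}(a\cdot 2^p),$$
and observe that $S_{q^\gamma}(a;t)$ differs from $U(t)$ only by the contribution of prime powers $p^k$ with $k\ge 2$, which is trivially $O(t^{1/2}\log^2 t)$. Theorem~\ref{thm:main} with $g=2$ then gives, for every $2\le t\le X$ (note $t\le X\le q^{A\gamma}$ so the hypothesis is automatic),
$$|U(t)| \le c(q,A)\bigl(t^{1-\delta(A)\rho(t)^2}\log t + t\,q^{-\delta(A)\gamma}\bigr) + O(t^{1/2}\log^2 t),$$
where $\rho(t) = \log t/(\gamma\log q)$ and $\rho(X)=\rho$.

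Abel summation yields
$$V(X) = \frac{U(X)}{\log X} + \int_2^X \frac{U(t)}{t\log^2 t}\,dt.$$
The boundary term already has the shape asserted by the corollary. For the integral I would split at $t_0 = X^{1/2}$: on $[2, t_0]$ the trivial bound $|U(t)| \le t\log t$ contributes $O(X^{1/2})$; on $[t_0, X]$ one has $\rho(t)\ge \rho/2$, hence $t^{-\delta(A)\rho(t)^2} \le t^{-\delta(A)\rho^2/4}$, and the three pieces of the estimate for $U(t)$ integrate directly to a total bound of the form $c(q,A)\bigl(X^{1-\delta_0(A)\rho^2} + X\,q^{-\delta_0(A)\gamma}\bigr)$ for any $\delta_0(A)<\delta(A)/4$.

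The only real point to monitor is that the exponent $1-\delta(A)\rho(t)^2$ in Theorem~\ref{thm:main} degrades as $t$ shrinks, since $\rho(t)\to 0$; restricting the critical part of the partial-summation integral to $t\ge X^{1/2}$ neutralises this at the cost of a constant factor in $\delta$. A secondary bookkeeping point is that in the regime where $\delta_0(A)\rho^2$ is close to $1$ one must ensure the $O(X^{1/2})$ remainders are absorbed into $X^{1-\delta_0(A)\rho^2}$; this is achieved by further shrinking $\delta_0(A)$ to satisfy $\delta_0(A)A^2\le 1/2$, which is legitimate since we are free to let $\delta_0(A)$ depend on $A$. I do not foresee any obstacle beyond this routine book-keeping, since all implicit constants depend only on $q$ and $A$, as required by the statement.
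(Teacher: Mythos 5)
Your proposal is correct and is exactly the "via partial summation" deduction the paper alludes to but does not write out: apply Theorem~\ref{thm:main} with $g=2$, pass from $\Lambda$-weights to the sum over primes by Abel summation, split the integral at $X^{1/2}$ to handle the deterioration of $\rho(t)$ for small $t$, and absorb the trivial $O(X^{1/2}\log^2 X)$ remainders by shrinking $\delta_0(A)$ (using $\rho\le A$). Since the paper leaves this step to the reader, your write-up simply supplies the omitted argument; the bookkeeping with $\rho(t)\ge\rho/2$ on $[X^{1/2},X]$ and $\delta_0(A)A^2\le 1/2$ is exactly what is needed.
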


We are now able to address the question of distribution of rightmost digits of Mersenne numbers. 
Given a string $\sigma$ of $s$ digits to base $q$, 
\be\label{eq:sigma}
\sigma = (a_{s-1}, \ldots, a_0) \in \{0, \ldots, q-1\}^s
\ee
we denote by $A_r(X,\sigma)$ the number of primes $p \le X$ such that $M_p$ written in base $q$
has $\sigma$ as the string on $s$ consecutive digits on positions $r, \ldots, r-s+1$, counting from the 
right to the left, where the numbering starts with  zero.

We recall that by the prime number theorem (in a very crude form) we have 
$\pi(X) = (1+o(1)) X/\log X$ as $X \to \infty$.

\begin{theorem}
\label{thm:MersDig}
For a fixed prime $q\ge 3$, a real $\varepsilon>0$ and a string $\sigma$ of length $s$ of the form~\eqref{eq:sigma},
uniformly over $\varepsilon \log X \le r \le (\log  X)^{3/2-\varepsilon}$ 
and strings $\sigma$ of length $s$ of the form~\eqref{eq:sigma} we have
$$
A_r(X,\sigma) = \(q^{-s} + o(1)\)\pi(X) 
$$
as  $X \to \infty$. 
\end{theorem}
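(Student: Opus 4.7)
The plan is to reduce the digit statistics to bounds on exponential sums with $M_p$ modulo $q^{\gamma}$ via Fourier detection, and then to apply Corollary~\ref{cor:Mers}. Set $\gamma=r+1$. The prescribed digits of $M_p$ at positions $r-s+1,\ldots,r$ are determined by $M_p \bmod q^{\gamma}$; writing $N_\sigma = \sum_{j=0}^{s-1} a_j q^j$, the condition is $M_p \bmod q^\gamma \in I$, where $I$ is the interval of length $q^{\gamma-s}$ starting at $N_\sigma q^{\gamma-s}$. Fourier-expanding the indicator of $I$ modulo $q^\gamma$ gives
\[
A_r(X,\sigma) = \frac{1}{q^\gamma}\sum_{a=0}^{q^\gamma-1} \widehat{\mathbf{1}_I}(a)\sum_{p\le X}\e_{q^\gamma}(aM_p),\qquad \widehat{\mathbf{1}_I}(a)=\sum_{x\in I}\e_{q^\gamma}(-ax),
\]
and the $a=0$ contribution produces the expected main term $q^{-s}\pi(X)$.

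Next, the $a\ne 0$ contribution will be handled by splitting according to the $q$-adic valuation. Writing $a=q^j b$ with $\gcd(b,q)=1$ and $0\le j<\gamma$, the inner sum becomes $\sum_{p\le X}\e_{q^{\gamma-j}}(bM_p)$. A direct geometric-sum calculation shows that $\widehat{\mathbf{1}_I}(q^jb)=0$ for $j\ge s$, while for $j<s$,
\[
|\widehat{\mathbf{1}_I}(q^jb)|\ll\min\bigl(q^{\gamma-s},\,q^{\gamma-j}/|b|_{\mathrm{sym}}\bigr),
\]
so that summing over $b$ coprime to $q$ yields $\sum_b|\widehat{\mathbf{1}_I}(q^jb)|\ll q^{\gamma-j}(\gamma-s+1)\log q$. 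Hence the error is controlled by
\[
\sum_{j=0}^{s-1}(\gamma-s+1)\,q^{-j}(\log q)\cdot\max_b\Bigl|\sum_{p\le X}\e_{q^{\gamma-j}}(bM_p)\Bigr|.
\]

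The key step is then to bound $\max_b|\sum_p\e_{q^{\gamma-j}}(bM_p)|$ for each $j$. Fixing $A=2/(\varepsilon\log q)$, whenever $\gamma-j\ge \varepsilon\log X/(2\log q)$ the hypothesis of Corollary~\ref{cor:Mers} is satisfied with modulus $q^{\gamma-j}$, giving a bound of the shape $X^{1-\delta_0(A)\rho_j^2}+Xq^{-\delta_0(A)(\gamma-j)}$. The upper bound $r\le(\log X)^{3/2-\varepsilon}$ forces $\rho^2\log X\ge (\log X)^{2\varepsilon}/(\log q)^2\to\infty$, so $X^{1-\delta_0\rho^2}$ beats $X$ by a factor decaying faster than any power of $\log X$; similarly $r\ge \varepsilon\log X$ makes the second term a power saving in $X$. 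In the complementary range $\gamma-j<\varepsilon\log X/(2\log q)$, where Corollary~\ref{cor:Mers} does not apply, the plan is to use the trivial bound $\pi(X)$ on the inner sum; the geometric factor $q^{-j}$ combined with $q^\gamma\ge X^{\varepsilon\log q}$ produces a power saving for this tail.

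Putting the three contributions together yields $A_r(X,\sigma)-q^{-s}\pi(X)=o(\pi(X))$ uniformly in $\sigma$ and in the stated range of $r$ and $s$. The hardest point will be the regime of small $\gamma-j$, where Corollary~\ref{cor:Mers} no longer saves anything; this is resolved not by a stronger exponential-sum bound but by the geometric decay $q^{-j}$ built into the Fourier weight together with the hypothesis $r\ge \varepsilon\log X$, which together absorb the trivial bound on the exponential sum and explain why such a lower bound on $r$ appears in the statement.
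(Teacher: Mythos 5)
Your argument is correct and reaches the result, but it routes through a slightly different piece of machinery than the paper does. The paper reformulates the digit condition as $\{M_p/q^{r+1}\}$ lying in an interval of length $q^{-s}$ and then invokes the Erd\H{o}s--Tur\'an inequality with truncation parameter $H=\lfloor X^{\varepsilon/2}\rfloor$; for each $1\le h\le H$ it strips the factor $\gcd(h,q^{r+1})$ off the modulus and checks, using $r\ge\varepsilon\log X$, that the reduced modulus $q^{\gamma}$ is still $\ge X^{\varepsilon/2}$, which makes Corollary~\ref{cor:Mers} applicable with $A=2/\varepsilon$. You instead expand the indicator of the residue interval $I\subset\Z/q^\gamma\Z$ as a full finite Fourier sum and split the frequencies $a$ by $q$-adic valuation $j=\nu_q(a)$, exploiting the exact vanishing $\widehat{\mathbf{1}_I}(q^jb)=0$ for $j\ge s$ together with the $q^{-j}$ decay of the weighted Fourier tail. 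The two methods are essentially the Fourier argument done by hand versus black-boxed through Erd\H{o}s--Tur\'an: your version exhibits the vanishing of high-valuation Fourier coefficients explicitly (which ET hides inside the $\pi(X)/H$ term), while the paper's version is shorter to write because ET automatically handles the truncation. Your treatment of the ``dangerous'' frequencies with $\gamma-j$ small via the geometric weight $q^{-j}$ and $r\ge\varepsilon\log X$ is the direct analogue of the paper's bound $q^\gamma\ge q^{r+1}/H\ge X^{\varepsilon/2}$; both isolate exactly why the lower bound on $r$ is needed. One small slip: you should take $A=2/\varepsilon$, not $A=2/(\varepsilon\log q)$, so that $\gamma-j\ge\varepsilon\log X/(2\log q)$ implies $X\le q^{A(\gamma-j)}$. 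With that fixed, the estimates go through uniformly in $\sigma$ and $s$ as claimed.
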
 

We remark that the lower bound on $r$ can be relaxed but a condition of this kind is necessary. 
For example, if $2$ is not a primitive root modulo $q$ the distribution of digits on the rightmost positions 
cannot be uniform. 

\section{Preliminaries}\label{sec:pre}
\subsection{Notation}

Throughout, $\N$ is the set of positive integers.
The letters $k$, $m$ and $n$ (with or without subscripts) are always used to denote positive integers;
the letter~$q$ 
(with or without subscripts) is always used to denote a prime.

Given a prime $q$, let $\padic$ denote the \emph{standard $q$-adic valuation}.
In particular, for every $n\in \Z \setminus \{0\}$ one has $\padic(n)=k$, where $k$ is
the largest nonnegative integer for which $q^k\mid n$.

Given a sequence of complex weights  
$$
\bgamma = (\gamma_h)_{h\in \cH},  
$$
supported on a finite set $\cH$ and $\varsigma\ge 1$   we define norms of $\bgamma$ in the usual way 
$$
\| \bgamma \|_{\infty}=\max_{h\in \cH} |\gamma_h| \quad \text{and} \quad \| \bgamma \|_{\varsigma}= \(\sum_{h\in \cH}  |\gamma_h|^{\varsigma} \)^{\frac{1}{\varsigma}}.
$$

For given functions $F$ and $G$, the notations $F\ll G$, $G\gg F$ and
$F=O(G)$ are all equivalent to the statement that the inequality
$|F|\le c|G|$ holds with some constant $c>0$. Throughout the paper, any implied constants in symbols $O$, $\ll$
and $\gg$ may depend on the parameters $q$, $A$ and are \emph{absolute} unless specified otherwise.

 We write $F\asymp G$
to indicate that $F\ll G$ and $G\ll F$  both hold.

Finally we use $\#\cS$ to denote the cardinality of a finite set $\cS$.

\subsection{Sums over primes}
\label{sec:primes}

It is convenient to use a form of the {\it Vaughan identity\/} given by~\cite[Chapter~24, Equation~(6)]{Dav}.

\begin{lemma}\label{lem:Vaughan's identity}
For any complex-valued function $f(n)$ with 
$|f(n)| \le 1$ and any real numbers $1 < U,V\leq X$ with $UV\leq X$, we have
$$
\sum\limits_{n\leq X}\Lambda(n)f(n)\ll U+\Sigma_1 \log X+ \Sigma_2^{1/2}  X^{1/2} (\log X)^3
$$
where
\begin{align*}
\Sigma_1   & = \sum_{t \leq UV}\max_{w \le X/t} \left|\sum_{w \le  m \leq X/t} f(mt)\right|,\\
\Sigma_2 &=  \max_{U \le w \le X/V} \max_{V \le j \le X/w} \sum_{V < m \le X/w} 
 \left|\sum_{\substack{w < n \le 2w\\ n \leq X/m\\ n \le X/j}} f(jn) \overline f(mn)\right|.
\end{align*} 
\end{lemma}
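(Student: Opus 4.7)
The plan is to derive this from the classical Vaughan identity, which is exactly what Davenport~\cite{Dav} presents in Chapter~24: for parameters $1<U,V\le X$ and any $n>U$, one has the decomposition
$$
\Lambda(n) = T_1(n) - T_2(n) - T_3(n) + T_4(n),
$$
where $T_1(n)=\sum_{b\mid n,\,b\le U}\mu(b)\log(n/b)$, $T_2(n)=\sum_{bc\mid n,\,b\le U,\,c\le V}\mu(b)\Lambda(c)$, $T_3(n)=\sum_{bc=n,\,b\le U,\,c\le V}\mu(b)\Lambda(c)$, and $T_4(n)$ is the genuine bilinear piece with $b>U$ and $c>V$. Since $\sum_{n\le U}\Lambda(n)\ll U$ by Chebyshev, the contribution from $n\le U$ is absorbed into the first term of the claimed bound, and the task reduces to estimating $\sum_{n\le X}T_i(n)f(n)$ for $i=1,2,3,4$.

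For the three ``Type~I'' terms I will interchange the order of summation so that the inner variable runs over multiples of a fixed $t\le UV$. After collecting, each contributes an expression of the shape $\sum_{t\le UV}\beta_t\sum_{m\le X/t}f(mt)$ with coefficients $\beta_t$ of size $O(\log X)$ (arising either from the factor $\log(n/b)$ in $T_1$, removed by partial summation, or from the divisor-type sum $\sum_{bc=t}\mu(b)\Lambda(c)$ in $T_2,T_3$). Pulling out the absolute value and taking the maximum over the starting point $w$ — needed because partial summation in $T_1$ produces an inner sum on an arbitrary subinterval — yields the bound $O(\Sigma_1\log X)$.

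For the ``Type~II'' term $T_4$ I write it as $\sum_{U<b,\,V<c,\,bc\le X}\alpha_b\Lambda(c)f(bc)$ with $\alpha_b=\sum_{d\mid b,\,d\le U}\mu(d)$, so $|\alpha_b|\le d(b)$. Cut the $c$-range into $O(\log X)$ dyadic intervals $w<c\le 2w$ with $U\le w\le X/V$, and in each piece apply Cauchy--Schwarz in the $b$-variable. The ``square'' sum $\sum_{b\le X/w}|\alpha_b|^2$ is $\ll X(\log X)^3/w$ by the standard mean-value bound for the divisor function, producing the factor $X^{1/2}(\log X)^{3/2}/w^{1/2}$; expanding the square on the $f$-side and splitting the diagonal from the off-diagonal (where the two copies of $b$ become distinct indices $m,j$ with $V<m\le X/w$ and $V\le j\le X/w$) produces the inner sum over $n$ with the three truncations $w<n\le 2w$, $n\le X/m$, $n\le X/j$ appearing in $\Sigma_2$. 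Summing dyadically over $w$ and taking the maxima over $w$ and $j$ gives the bound $O(\Sigma_2^{1/2}X^{1/2}(\log X)^3)$.

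The main technical care is in the Type~II step: one must track the three different truncations that appear after Cauchy--Schwarz and dyadic splitting, and verify that the resulting ranges of $w$ and $j$ coincide with those written in the statement. Everything else — Chebyshev for the small $n\le U$ piece, partial summation to remove $\log(n/b)$ in the Type~I sums, and mean values for $d(b)^2$ — is routine once the identity is set up. Since the statement is simply quoted from Davenport (Chapter~24, equation~(6)), the proposed ``proof'' amounts to invoking that reference and noting that the displayed form of $\Sigma_1,\Sigma_2$ is precisely the one produced by the argument sketched above.
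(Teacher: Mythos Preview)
Your proposal is correct and matches the paper, which offers no proof at all but simply cites \cite[Chapter~24, Equation~(6)]{Dav}; your sketch is exactly the standard derivation from Vaughan's identity carried out there. One cosmetic slip: in your Type~II step, after Cauchy--Schwarz in $b$ it is the variable $c$ (not $b$) that gets duplicated into the indices $m,j$, and correspondingly the dyadically split variable is the one that becomes the inner $n$ in $\Sigma_2$ --- but this is only a labelling issue and does not affect the argument.
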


\subsection{Multiplicative order of integers}
\label{sec:tech-lemmas}

Fix a prime $q\ge 3$ and an integer $g\neq\pm 1$ with $\gcd(g,q) =1$. 
For every $n\in\N$, let $\tau_n=\ord_{q^n} g$ denote the order of $g$ modulo $q^n$.
We write
\be\label{eq:gtn}
g^{\tau_n}=1+h_nq^{n+\mathfrak g_n}\qquad(n\in\N)
\ee
with some uniquely determined integers $h_n$ and $\mathfrak g_n\ge 0$ such that
$\gcd(h_n,q)=1$. 
We also put
\be\label{eq:def tau G}
\tau=\tau_1\mand\gdx=\mathfrak g_1+1=\padic(g^\tau-1).
\ee
A simple argument shows
\be\label{eq:infinity-war}
\mathfrak g_n=\begin{cases}
\gdx-n&\quad\hbox{if $n\le\gdx$},\\
0&\quad\hbox{if $n\ge\gdx$},
\end{cases}
\quad \text{and} \quad 
\tau_n=\begin{cases}
\tau&\quad\hbox{if $n\le\gdx$},\\
q^{n-\gdx}\tau&\quad\hbox{if $n\ge\gdx$}.
\end{cases}
\ee

The following two statements are easy consequences of~\eqref{eq:infinity-war}.

\begin{lemma}
\label{lem:frog1}
For $r\ge s\ge\gdx$ we have
$$
g^{n_1\tau_s}\equiv g^{n_2\tau_s}\bmod{q^r}\quad\Longleftrightarrow\quad
q^{r-s}\mid (n_1-n_2).
$$
\end{lemma}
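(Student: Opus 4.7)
The plan is to reduce the stated equivalence to a single $q$-adic valuation identity, which will then be supplied by the lifting-the-exponent lemma (LTE). Since $\gcd(g,q)=1$, the congruence $g^{n_1\tau_s}\equiv g^{n_2\tau_s}\bmod q^r$ is equivalent, upon writing $n=n_1-n_2$, to $g^{n\tau_s}\equiv 1\bmod q^r$, that is, $\padic(g^{n\tau_s}-1)\ge r$. Thus it suffices to compute $\padic(g^{n\tau_s}-1)$ explicitly in terms of $s$ and $\padic(n)$.

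Next I would use the hypothesis $s\ge\gdx$ together with~\eqref{eq:gtn} and~\eqref{eq:infinity-war}: since $\mathfrak g_s=0$ for $s\ge\gdx$, we have
$$
g^{\tau_s}=1+h_s q^{s},
$$
with $\gcd(h_s,q)=1$. Raising to the $n$-th power, the problem becomes one of computing
$$
\padic\bigl((1+h_s q^s)^n-1\bigr).
$$

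The key step is the application of LTE for the odd prime $q\ge 3$. With $x=1+h_s q^s$ and $y=1$, we have $x\equiv y\bmod q$ (indeed $q^s\mid x-y$ with $s\ge\gdx\ge 1$) while $q\nmid x,y$, so LTE for odd primes applies and gives
$$
\padic\bigl((1+h_sq^s)^n-1\bigr)=\padic(h_sq^s)+\padic(n)=s+\padic(n).
$$
Combining with the reduction of the first paragraph, the condition $\padic(g^{n\tau_s}-1)\ge r$ is equivalent to $s+\padic(n)\ge r$, i.e.\ to $q^{r-s}\mid n=n_1-n_2$, which is precisely the claim.

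There is no real obstacle here; the only points requiring care are checking that the hypotheses of LTE are met, namely that $q$ is odd (guaranteed by the blanket assumption $q\ge 3$ that motivated Section~\ref{sec:tech-lemmas}) and that $s\ge 1$ (which follows from $s\ge\gdx\ge 1$), so that $q$ genuinely divides $x-y$. The assumption $s\ge\gdx$ is essential in order to invoke the second cases of~\eqref{eq:infinity-war}, ensuring $\mathfrak g_s=0$ so that the exponent of $q$ appearing in $g^{\tau_s}-1$ is exactly $s$; without this, the $q$-adic valuation would jump and the clean formula would fail.
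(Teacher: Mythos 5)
Your proof is correct, and it takes a mildly different route from the paper's. The paper does not actually write out a proof of this lemma, declaring it an ``easy consequence of~\eqref{eq:infinity-war}''; the implicit argument, parallel to the one given for Lemma~\ref{lem:frog2}, reduces $g^{(n_1-n_2)\tau_s}\equiv 1\bmod q^r$ to the order-divisibility condition $\tau_r\mid(n_1-n_2)\tau_s$, then substitutes $\tau_r=q^{r-\gdx}\tau$ and $\tau_s=q^{s-\gdx}\tau$ from~\eqref{eq:infinity-war} and uses $\gcd(\tau,q)=1$ (equivalently, one can just take $m=\tau_s$ in Lemma~\ref{lem:frog2}). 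You instead extract the exact identity $g^{\tau_s}=1+h_sq^s$ from~\eqref{eq:gtn} and~\eqref{eq:infinity-war} and compute $\padic(g^{n\tau_s}-1)$ directly via lifting-the-exponent. These are the same arithmetic fact in different packaging: LTE for odd primes is itself proved by the binomial expansion underlying~\eqref{eq:infinity-war}, so citing LTE makes the step self-contained at the cost of importing an external lemma, whereas the order-divisibility route stays inside the machinery the paper has already built. One small point to tighten: when $n_1=n_2$ both sides are trivially true, and for $n_1\ne n_2$ you should pass to $n=|n_1-n_2|>0$ before raising to the $n$th power, since $\padic$ is insensitive to sign and LTE is stated for positive integer exponents; as written, a possibly negative exponent $n=n_1-n_2$ does not give an integer identity.
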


\begin{lemma}
\label{lem:frog2}
For $m\in\N$ and nonnegative integers $x$ and $y$ with $x\ne y$, either $q\nmid g^{mx}-g^{my}$ or
$$
\padic(g^{mx}-g^{my})=\padic(x-y)+\padic(m)+\gdx.
$$
\end{lemma}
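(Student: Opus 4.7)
The plan is to reduce the problem to computing $\padic(g^N - 1)$ for a suitable exponent $N$, and then to apply a binomial expansion controlled by~\eqref{eq:gtn}. First, I would assume without loss of generality that $x > y$ and factor
\[
g^{mx} - g^{my} = g^{my}\bigl(g^{m(x-y)} - 1\bigr).
\]
Since $\gcd(g,q) = 1$, the factor $g^{my}$ is a $q$-adic unit, so $\padic(g^{mx} - g^{my}) = \padic(g^N - 1)$ with $N = m(x-y)$.

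Next, in the nontrivial case $q \mid g^N - 1$, the order $\tau = \ord_q g$ divides $N$, so I would set $k = N/\tau$. By~\eqref{eq:gtn} and~\eqref{eq:def tau G} one has $g^\tau = 1 + h q^{\gdx}$ for some integer $h$ with $\gcd(h, q) = 1$, and the binomial theorem gives
\[
g^N - 1 = (1 + h q^{\gdx})^k - 1 = k h q^{\gdx} + \sum_{j=2}^{k} \binom{k}{j} h^j q^{j\gdx}.
\]

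The decisive step is to show that the leading term $khq^{\gdx}$ strictly dominates the tail $q$-adically, that is,
\[
\padic\bigl(\binom{k}{j}\bigr) + (j-1)\gdx > \padic(k) \qquad (j \geq 2).
\]
I would handle this via $\padic\bigl(\binom{k}{j}\bigr) \geq \padic(k) - \padic(j)$, coming from the identity $\binom{k}{j} = (k/j)\binom{k-1}{j-1}$, which reduces the task to the inequality $(j-1)\gdx > \padic(j)$. Since $\gdx \geq 1$ and $q \geq 3$, one has $\padic(j) \leq \log_q j < j-1 \leq (j-1)\gdx$ for every $j \geq 2$. This is the main (and essentially only) obstacle in the proof, and it is precisely where the hypothesis $q \geq 3$ enters: at $(q,j,\gdx) = (2,2,1)$ the inequality collapses to $1 > 1$, reflecting the familiar breakdown of the lifting-the-exponent formula at the prime~$2$.

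Given this dominance, $\padic(g^N - 1) = \padic(khq^{\gdx}) = \padic(k) + \gdx$. Finally, since $\tau$ divides $q-1$ we have $\padic(\tau) = 0$, hence $\padic(k) = \padic(N) = \padic(m) + \padic(x-y)$, and the claimed identity follows.
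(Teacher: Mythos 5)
Your proof is correct, but it takes a genuinely different route from the paper's. The paper's argument is a two-line reduction: it observes that $q^n\mid g^{mx}-g^{my}$ if and only if $\tau_n\mid m(x-y)$ (where $\tau_n=\ord_{q^n}g$), so $\padic(g^{mx}-g^{my})=\max\{n\ge 0:\tau_n\mid m(x-y)\}$, and then reads off the answer from the explicit description of $\tau_n$ in~\eqref{eq:infinity-war} together with $\gcd(\tau,q)=1$. The real content is thus encapsulated in~\eqref{eq:infinity-war}, which the paper dispatches with the phrase ``a simple argument shows.'' Your argument instead proves the lemma from scratch by a classical lifting-the-exponent computation: factor out the unit $g^{my}$, expand $(1+hq^G)^k-1$ binomially, and show the linear term dominates $q$-adically by checking $\padic\bigl(\binom{k}{j}\bigr)+(j-1)G>\padic(k)$ via the identity $j\binom{k}{j}=k\binom{k-1}{j-1}$ and the elementary bound $\padic(j)\le\log_q j<j-1$. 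Your version is longer but self-contained, and it makes explicit exactly where the standing hypothesis $q\ge 3$ is used, namely in the dominance inequality, which fails at $q=2$, $j=2$, $G=1$; the paper's route hides that dependence inside the unproved~\eqref{eq:infinity-war}. Each approach buys something: the paper's is economical given the order formula, while yours surfaces the $q\ge 3$ mechanism and would adapt more readily if one wanted to patch the $q=2$ case.
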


\begin{proof} Put $\tau_0=1$. For any integer $n\ge 0$ we have
$q^n\mid g^{mx}-g^{my}$ if and only if
$mx\equiv my\bmod{\tau_n}$.  Consequently,
$$
\padic(g^{mx}-g^{my})=\max\{n\ge 0:~\tau_n\mid m(x-y)\},
$$
and the result follows from~\eqref{eq:infinity-war} as $\gcd(\tau,q)=1$.
\end{proof}

\subsection{Explicit form of the Vinogradov mean value theorem}

Let $N_{r,k}(P)$ be the number of integral solutions to the system of equations
$$
n_1^j+\cdots + n_r^j=m_1^j+\cdots+m_r^j\qquad
(1\le j\le k,~1\le n_{\ell},m_{\ell}\le P).
$$
Our application of Lemma~\ref{lem:Kor} below requires a precise form of the 
Vinogradov mean value theorem.
For this purpose, we use a fully explicit version
due to Ford~\cite[Theorem~3]{Ford},
which is presented here in a weakened and simplified form. 

\begin{lemma}
\label{lem:Ford}
For any integer $k\ge 129$ there is an integer
$r\in[2k^2,4k^2]$ such that for $P >0$
$$
N_{r,k}(P)\le k^{3k^3}P^{2r-k(k+1)/2+k^2/1000}. 
$$  
\end{lemma}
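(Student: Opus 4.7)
The plan is to establish the stated bound by the classical $p$-adic (Linnik--Vinogradov) method for Vinogradov's mean value theorem, carrying out the explicit tracking of constants used in Ford's~\cite{Ford} approach. First I would express $N_{r,k}(P)$ via orthogonality as
$$
N_{r,k}(P) = \int_{[0,1]^k}\left|\sum_{n\le P}\exp\left(2\pi i(\alpha_1 n+\alpha_2 n^2+\cdots+\alpha_k n^k)\right)\right|^{2r}d\balpha,
$$
so the task becomes one of bounding a high moment of a Weyl sum. The central goal is to set up a recursive descent reducing $N_{r,k}(P)$ to $N_{r-k,k}(P')$ with $P'\approx P^{1-1/k}$, losing at each step only a factor that, when iterated $\asymp k$ times, contributes the modest exponent gain $k^2/1000$ promised in the lemma.

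The engine of the descent is the following fact. Choose a prime $p\asymp P^{1/k}$; by pigeonholing, a positive proportion of the solutions have at least $k$ of the variables $n_1,\ldots,n_r$ lying in pairwise distinct residue classes modulo $p$. Peel these off: the requirement that two such $k$-tuples $(m_1,\ldots,m_k)$ and $(m'_1,\ldots,m'_k)$ with distinct residues modulo $p$ produce matching power sums of orders $1,\ldots,k$ reduces, via the invertibility of the associated Vandermonde matrix modulo $p$, to a congruence system whose solutions are counted by roughly $p^{k(k-1)/2}(P/p)^k$. This identifies the peeled variables modulo $p^k$ up to permutation and leaves a system in only $r-k$ variables over a shorter interval.

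Iterating this reduction $s\asymp k$ times and then closing with a trivial bound on the much shorter remaining system produces the desired inequality. The choice $r\in[2k^2,4k^2]$ appears here because it is the smallest regime for which the descent terminates before the reduced length $P_s$ collapses below unity, while remaining small enough that the cumulative Vandermonde factor stays within the stated constant.

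The main obstacle is the \emph{explicit} bookkeeping: each descent step introduces combinatorial penalties (a factor $k!$ from choosing distinguished variables, binomial coefficients from pigeonholing, and powers of $p$ from the congruence count), and one must verify that after $\asymp k$ iterations the product of all these losses remains bounded by $k^{3k^3}$. The hypothesis $k\ge 129$ most likely reflects the threshold beyond which a certain numerical inequality controlling the per-step gain becomes favourable, so that the combinatorial balance goes through without separate low-$k$ adjustments.
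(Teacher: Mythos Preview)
Your proposal sketches a from-scratch reconstruction of Ford's explicit Vinogradov mean value theorem via the $p$-adic efficient-differencing method. That is indeed the method underlying~\cite[Theorem~3]{Ford}, and carried out carefully it would yield the stated bound. However, this is \emph{not} what the paper does, and it is enormously more work than required.

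The paper gives no proof of this lemma at all in the usual sense: it simply quotes~\cite[Theorem~3]{Ford} as a black box, remarking that the statement here is a ``weakened and simplified form'' of Ford's result. The only argument the paper supplies is a one-line observation to justify the lower bound $r\ge 2k^2$, which is not explicit in Ford's theorem. Namely, the paper invokes the elementary monotonicity
\[
N_{r+1,k}(P)\,P^{-2(r+1)} \le N_{r,k}(P)\,P^{-2r},
\]
so that if Ford's theorem supplies some $r\le 4k^2$ with the required exponent, one may increase $r$ up to $2k^2$ (if it was smaller) without spoiling the bound.

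In short: your outline is a plausible route to an independent proof, but the ``proof'' expected here is just a citation plus the monotonicity remark above. If you were writing this up, a single sentence invoking~\cite[Theorem~3]{Ford} and the monotonicity inequality is all that is needed.
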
 

We note that the condition $r \ge 2k^2$ is not explicit in~\cite[Theorem~3]{Ford}
however we can always impose this in view of the well-know (and essentially 
trivial) monotonicity property
$$
N_{r+1,k}(P)P^{-2(r+1)} \le N_{r,k}(P) P^{-2r}.
$$ 

We  also observe that the recent striking advances
in the Vinogradov mean value theorem due to 
Bourgain, Demeter and Guth~\cite{BDG} and Wooley~\cite{Wool}  are not suitable
for our purposes here as they contain implicit constants
that depend on $r$ and~$k$, whereas in our approach $r$ and $k$
grow together with $P$. On the other hand, a result of Steiner~\cite{Ste} 
may perhaps be used to improve numerical constants in our estimates in some ranges
of parameters. 

\subsection{Double exponential sums with polynomials}

Our main tool to bound the exponential sum $S_m(a,X)$  is the following variation of a
result of Korobov~\cite[Lemma~3]{Kor}; examining the proof of~\cite[Lemma~3]{Kor} one
can easily see that one can add complex weights   
$\alpha(x)$ and  $\beta(y)$ without any 
changes in the proof.  

It is convenient to denote 
$$
\e(t)=\exp(2\pi i t)\qquad(t\in\R).
$$

\begin{lemma}\label{lem:Kor}
Let $\xi_j\in\R$ for $j=1,\ldots,k$, and suppose that each $\xi_j$
has a rational approximation such that
$$
\left|\xi_j-\frac{b_j}{q_j}\right|\le\frac{1}{q_j^2}\qquad\text{with}\quad
b_j\in\Z,\quad q_j\in\NN,\quad\text{and}\quad(b_j,q_j)=1.
$$
Then, for any natural number $r$ and sequences of complex numbers $\alpha(x),\beta(y)$ satisfying 
$$|\alpha(x)|,|\beta(y)|\le 1,$$
 the sum
$$
S=\sum_{x,y=1}^P \alpha(x)\beta(y)\e\(\xi_1xy+\cdots+\xi_k x^ky^k\),
$$
admits the upper bound
\begin{align*}
|S|^{2r^2}\le \(64r^2\log(3Q)\)^{k/2} &
P^{4r^2-2r} N_{r,k}(P)\\
& \prod_{j=1}^k\min\left\{P^j,P^jq_j^{-1/2}+q_j^{1/2}\right\},
\end{align*}
where 
$$
Q=\max\limits_{1\le j\le k}\{q_j\}.
$$
\end{lemma}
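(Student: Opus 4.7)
The plan is to follow Korobov's original argument~\cite[Lemma~3]{Kor}, which, as the authors remark, carries over to the weighted setting without substantial modification since $|\alpha(x)|,|\beta(y)|\le 1$ permits all weight-removal steps via H\"older's or the Cauchy--Schwarz inequality.

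First, I would fix $y$ and regard the inner sum $T_y=\sum_{x=1}^P\alpha(x)\,\e\bigl(\sum_j\xi_j y^j x^j\bigr)$ as a one-dimensional polynomial Weyl sum in $x$ with coefficients $(\xi_j y^j)_{j=1}^k$. Applying H\"older's inequality in $y$ and absorbing $|\beta(y)|\le 1$, then expanding the $2r$-th power and swapping summation, yields
$$
|S|^{2r}\le P^{2r-1}\Bigl|\sum_{\mathbf{x}\in[1,P]^{2r}}\prod_{i=1}^{2r}\alpha^{\pm}(x_i)\sum_{y=1}^P\e\Bigl(\sum_{j=1}^k\xi_j y^jA_j(\mathbf{x})\Bigr)\Bigr|,
$$
where $A_j(\mathbf{x})=x_1^j+\cdots+x_r^j-x_{r+1}^j-\cdots-x_{2r}^j$ is the Vinogradov difference of degree $j$. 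A further round of Cauchy--Schwarz on the $\mathbf{x}$-sum, both to eliminate the remaining weights and to raise the left-hand exponent to $2r^2$, introduces the quantity $N_{r,k}(P)$ via the standard fact
$$
\#\{\mathbf{x}\in[1,P]^{2r}:A_j(\mathbf{x})=B_j,\ 1\le j\le k\}\le N_{r,k}(P),
$$
valid uniformly in $\mathbf{B}=(B_1,\ldots,B_k)$. This is the point at which Vinogradov's mean value theorem (Lemma~\ref{lem:Ford} or its general form) is invoked.

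After these manipulations one is reduced to estimating an expression controlled by
$$
\sum_{\mathbf{B}}\Bigl|\sum_{y=1}^P\e\Bigl(\sum_{j=1}^k\xi_jB_j y^j\Bigr)\Bigr|^{2},
$$
where $\mathbf{B}=(B_1,\ldots,B_k)$ ranges over the box $|B_j|\le rP^j$. Expanding the modulus-squared brings in two variables $y_1,y_2$; the $\mathbf{B}$-sum then factorises across the coordinates, and each one-dimensional sum over $B_j$ is handled using the rational approximation $\xi_j\approx b_j/q_j$ via the classical estimate $\sum_{|h|\le H}\min(M,\|\alpha h\|^{-1})\ll\log(3q)(HMq^{-1}+H+M+q)$. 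This produces, per degree $j$, a factor of $\min\{P^j,\,P^jq_j^{-1/2}+q_j^{1/2}\}$ and a square-root logarithmic contribution, which together give the product and the factor $(\log(3Q))^{k/2}$ in the statement.

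The main difficulty is orchestrating the two rounds of Cauchy--Schwarz so as to deliver exactly the exponent $P^{4r^2-2r}$, the Vinogradov factor $N_{r,k}(P)$ to the first power, and the Weyl factor cleanly across all $k$ degrees. Korobov's original argument balances these moves precisely; the only novelty in the present setting is checking, at each application of Cauchy--Schwarz or H\"older, that the weights $\alpha(x),\beta(y)$ may be replaced by $1$ using $|\alpha|,|\beta|\le 1$, which is exactly the modification the authors flag in the paragraph preceding the lemma.
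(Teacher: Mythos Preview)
The paper does not supply its own proof of this lemma; it merely cites Korobov~\cite[Lemma~3]{Kor} and observes that bounded complex weights $\alpha(x),\beta(y)$ can be carried through the argument unchanged. Your proposal is precisely this---reproduce Korobov's double H\"older/Vinogradov mean value argument while noting at each step that $|\alpha|,|\beta|\le 1$ allows the weights to be discarded---so the approaches coincide (with the minor caveat that the passage from exponent $2r$ to $2r^2$ in your second step is really a H\"older inequality with exponent $r$, not a single Cauchy--Schwarz).
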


The following result follows from the standard completing technique, see~\cite[Section~12.2]{IwKow}.

\begin{lemma}\label{lem:sum-convert}
For an arbitrary function $f:\R\to\R$,
an interval $\cI$ of length $N$, and integers $U,V$ satisfying 
$$UV\le \frac{N}{2},$$
there exists some $\alpha \in \R$ such that
$$
\sum_{x\in\cI}\e(f(x))  
\ll \frac{\log{N}}{UV}\sum_{x\in\cJ}\sum_{u\le U}
\left|\sum_{\substack{v \le V}}\e(f(x+uv)+\alpha v)\right|, 
$$
and $\cJ$ is some interval of length $2N$. 
\end{lemma}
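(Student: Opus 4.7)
The plan is to combine the standard ``shift-and-average'' (van der Corput $A$-process) trick with the completing-of-sums technique of~\cite[Section~12.2]{IwKow}.

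The first step is the shift identity. For any integers $u \in [1,U]$ and $v \in [1,V]$ the hypothesis $UV \le N/2$ ensures that the symmetric difference of $\cI$ and its translate $\cI - uv$ has cardinality at most $2UV \le N$, so the change of variable $x \mapsto x - uv$ yields
$$
\sum_{x \in \cI} \e(f(x)) = \sum_{x \in \cI} \e(f(x+uv)) + O(UV).
$$
Averaging this identity over all pairs $(u,v)$ with $u \le U$ and $v \le V$, then pulling the absolute value through the sums over $x$ and $u$ while keeping the $v$-sum inside, and finally enlarging the outer range to an interval $\cJ$ of length $2N$ containing $\cI$ together with all of its relevant translates, produces the preliminary bound
$$
\Big|\sum_{x \in \cI} \e(f(x))\Big| \le \frac{1}{UV} \sum_{x \in \cJ} \sum_{u \le U} \Big|\sum_{v \le V} \e(f(x+uv))\Big| + O(UV).
$$

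The second step introduces the phase $\e(\alpha v)$. Applying the completion technique of~\cite[Section~12.2]{IwKow} to the inner sum, one expresses the indicator $\mathbf{1}_{1 \le v \le V}$ as a Fourier sum over phases $\e(\beta v)$ whose coefficients have total $L^1$-mass $O(\log N)$. Substituting this expansion into the inner sum, swapping summations so that the Fourier frequency $\beta$ sits outermost, and pigeonholing on the dominant frequency, extracts a single $\alpha \in \R$ for which
$$
\sum_{x \in \cJ} \sum_{u \le U} \Big|\sum_{v \le V} \e(f(x+uv))\Big| \ll \log N \cdot \sum_{x \in \cJ} \sum_{u \le U} \Big|\sum_{v \le V} \e(f(x+uv) + \alpha v)\Big|.
$$
The error $O(UV)$ from the first step is absorbed since the right-hand side is trivially of order at least $N \ge 2UV$.

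The main obstacle is making the completion step produce a single $\alpha$ that is uniform in $x$ and $u$. This is handled by first substituting the full Fourier expansion and interchanging its summation with the outer $x$- and $u$-sums, after which a pigeonhole argument on the (essentially finite) Fourier spectrum selects the dominant $\alpha$. The fact that the Fourier coefficients of $\mathbf{1}_{1 \le v \le V}$ depend only on $V$ and on the auxiliary completion modulus (which may be taken $\asymp N$), rather than on the outer variables, is what makes this extraction possible.
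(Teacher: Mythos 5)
There is a genuine gap, and it stems from discarding the constraint too early. Your first step replaces the paper's exact substitution with an approximate shift carrying an additive error $O(UV)$. The paper instead writes the exact identity
$$
\sum_{x\in\cI}\e(f(x))  = \sum_{\substack{x \in \cJ\\ x+uv \in \cI}} \e(f(x+uv)),
$$
averages over $u\le U$, $v\le V$, and interchanges the sums. The crucial point is that the condition $x+uv\in\cI$ survives and, for each fixed $x$ and $u$, confines $v$ to a \emph{sub}interval of $[1,V]$. It is this incomplete $v$-sum to which the completion technique of~\cite[Section~12.2]{IwKow} is applied: expanding the subinterval indicator in Fourier series on $\Z/V\Z$ introduces the linear phases $\e(rv/V)$, the coefficient bound $\ll\min(1,1/|r|)$ is \emph{uniform} in $x$ and $u$, and after summing over $x,u$ one pigeonholes on a single $r$ (hence a single $\alpha=r/V$) at the cost of a factor $\sum_{|r|\le V/2}1/(|r|+1)\ll\log N$. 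No additive error term ever appears.

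In your proof, after step one the inner $v$-sum is already complete, so there is nothing for the completion technique to act on: expanding $\ind{1\le v\le V}$ in Fourier series modulo $V$ yields the constant function and produces no nontrivial phases, while expanding it modulo a larger modulus $M>V$ would produce a complete sum over $v\bmod M$, not the truncated sum $\sum_{v\le V}$ in the lemma's conclusion. Your displayed inequality in step two is true with $\alpha=0$ by inspection, but then it does not introduce any phase and adds nothing. Finally, the absorption of the $O(UV)$ error is not justified: the right-hand side of the lemma is \emph{not} trivially of order at least $N$. For example with $f\equiv 0$ and $\alpha=1/2$ the inner geometric sum is $O(1)$ and the right-hand side is $O(N\log N/V)$, far below $N$ for large $V$; and even for the maximising $\alpha$ there is no a priori lower bound of order $N$. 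The correct argument keeps the constraint and so never produces the error term that needs absorbing.
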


\begin{proof} It is enough to write 
$$
\sum_{x\in\cI}\e(f(x))  = \sum_{\substack{x \in \cJ\\ x+uv \in \cI}} \e(f(x+uv))  
$$
and the use the completing  technique from~\cite[Section~12.2]{IwKow} to encode the 
condition $x+uv \in \cI$ into linear exponential sums and the use~\cite[Bound~(8.6)]{IwKow}. \end{proof} 
 
\subsection{Bilinear forms with exponential functions}

Fix a prime $q$ and integer $g\neq\pm 1$ with $\gcd(q,g)=1$. 
We denote
by $\tau_n$
the order of $g$ modulo $q^n$, and recall how  $G$ is  defined in~\eqref{eq:def tau G}.

The following proposition is the main ingredient for Theorem~\ref{thm:main}. It 
uses some ideas of Korobov~\cite[Theorem~4]{Kor}.

\begin{proposition}\label{lem:bilinear}
Let $\gamma\in\N$ with $\gamma>16G$.
Given integers $K,L\ge 0$ and $M,N\ge 1$ with
$$
M\le q^{2\gamma/65},
$$
two sequences of complex weights
$$
\balpha = (\alpha_m)_{m=K+1}^{K+M} \mand 
\bbeta = (\beta_n)_{n=L+1}^{L+N}
$$ 
and an integer $z$ not divisible by $q$, 
for the sum
$$
S =  \sum_{m=K+1}^{K+M}  \sum_{n=L+1}^{L+N}  
\alpha_m\beta_n\e_{q^\gamma}(zg^{mn}), 
$$
we have
\begin{align*}
S\ll 
  \|\balpha\|_{2}\|\bbeta\|_\infty & \( M^{1/2-10^{-10}\rho^2}N   \log{M} + M^{1/2}N^{1/2}\) \\ 
  & \qquad \qquad \qquad \qquad\qquad\qquad +\|\balpha\|_{\infty}\|\bbeta\|_{\infty}Nq^{8G},
\end{align*}
where
$$
\rho=\frac{\log M}{\log q^\gamma}.
$$
\end{proposition}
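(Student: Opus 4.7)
The plan is to combine Cauchy--Schwarz in the variable $m$ with Korobov's $q$-adic shift trick, reducing $S$ to a polynomial exponential sum amenable to Lemma \ref{lem:Kor} and Lemma \ref{lem:Ford}. I begin with Cauchy--Schwarz in $m$ to remove the weight $\alpha_m$:
$$|S|^2 \le \|\balpha\|_2^2 \sum_{m=K+1}^{K+M}\Bigl|\sum_{n=L+1}^{L+N}\beta_n\,\e_{q^\gamma}(zg^{mn})\Bigr|^2 = \|\balpha\|_2^2 \sum_{n_1, n_2} \beta_{n_1}\overline{\beta_{n_2}}\, W(n_1, n_2),$$
where $W(n_1, n_2) = \sum_m \e_{q^\gamma}(z(g^{mn_1} - g^{mn_2}))$. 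The diagonal $n_1 = n_2$ contributes $M\|\bbeta\|_2^2 \le MN\|\bbeta\|_\infty^2$, which after taking square roots yields the secondary term $\|\balpha\|_2\|\bbeta\|_\infty M^{1/2}N^{1/2}$. The bulk of the work is then to prove a Korobov-type bound
$$|W(n_1, n_2)| \ll M^{1 - 2\delta\rho^2}\log M$$
for all non-exceptional $(n_1, n_2)$ with $n_1 \ne n_2$; the exceptional pairs, in which the $q$-adic structure of $n_1-n_2$ or $m$ is anomalously large, are bounded trivially and are absorbed into the $Nq^{8G}$ error term.

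To obtain the Korobov-type bound on $W$, I would use a shift variant of Lemma \ref{lem:sum-convert}. With an auxiliary index $s$ satisfying $G < s < \gamma$, setting $T = \tau_s$ and choosing $U, V$ with $UVT \le M/2$, the shift $m \mapsto m + uvT$ converts $W$ into
$$|W(n_1, n_2)|\ll \frac{\log M}{UV}\sum_{m\in \cJ}\bigl|T_m(n_1, n_2)\bigr| + \text{boundary error},$$
where $T_m(n_1, n_2) = \sum_{u=1}^U\sum_{v=1}^V\e_{q^\gamma}\bigl(z\bigl(g^{(m+uvT)n_1} - g^{(m+uvT)n_2}\bigr)\bigr)$ and $\cJ$ is an interval of length $\asymp M$. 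The role of the extra factor $T$ in the shift is crucial: since $g^T = 1+h_sq^s$ by \eqref{eq:gtn}, the binomial expansion of $(1+h_sq^s)^{uvn_j}$ truncates modulo $q^\gamma$ at degree $k = \lceil \gamma/s\rceil - 1$, and one finds
$$\frac{z\,(g^{(m+uvT)n_1}-g^{(m+uvT)n_2})}{q^\gamma}\equiv \sum_{l=0}^{k}\xi_l^{(m,n_1,n_2)}(uv)^l\pmod 1,$$
which is precisely the polynomial structure handled by Lemma \ref{lem:Kor}.

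I would then apply Lemma \ref{lem:Kor} to $T_m$ for each fixed $m$, taking $P = \min(U,V)$, estimating $N_{r,k}(P)$ via Lemma \ref{lem:Ford} with $r \in [2k^2, 4k^2]$, and using rational approximations to each $\xi_l$ of denominator nominally $q^{\gamma - ls}/l!$. Summing the estimate for $|T_m|$ over $m \in \cJ$ and dividing by $UV$ produces the desired saving in $W(n_1, n_2)$. The parameters $s, k, r, U, V$ must be picked as suitable powers of $q$ and $M$ so as to balance the Vinogradov factor $N_{r,k}(P)^{1/(2r^2)}$, the product $\prod_l\min\{P^l, P^lq_l^{-1/2}+q_l^{1/2}\}$, and the boundary error of size $UVT$. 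The hypotheses $M \le q^{2\gamma/65}$ and $\gamma > 16G$ leave just enough slack for this optimization to go through, and the numerical constant $10^{-10}$ in the final exponent reflects the strength of the Vinogradov mean value theorem in the form of Lemma \ref{lem:Ford}.

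The main obstacle is tracking the $q$-adic denominators of the coefficients $\xi_l$. A priori each denominator is $q^{\gamma-ls}$, but it can shrink due to three sources: $\padic(l!)$, bounded by $l/(q-1)$ via Legendre's formula; $\padic(n_1-n_2)$ and $\padic(m)$ through Lemma \ref{lem:frog2} applied to $g^{mn_1}-g^{mn_2}$; and $\padic(n_j)$ through the factor $n_j^l$ arising in the polynomial expansion of $\binom{uvn_j}{i}$. Off-diagonal pairs $(n_1, n_2)$ and values of $m$ for which this shrinkage is too severe for Lemma \ref{lem:Kor} to produce cancellation must be peeled off and bounded trivially; their contribution is exactly what the $Nq^{8G}$ error term absorbs. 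Collecting all pieces, substituting back into the Cauchy--Schwarz inequality, and simplifying yields the claim.
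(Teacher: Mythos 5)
Your overall strategy matches the paper's: Cauchy--Schwarz in $m$, a Korobov shift built on $\tau_s = \ord_{q^s} g$ so that the inner sum becomes a polynomial exponential sum in a product variable, then Lemma~\ref{lem:Kor} with Lemma~\ref{lem:Ford}, with the exceptional $q$-adic cases peeled off. The bookkeeping you describe (tracking $\padic$ of the polynomial coefficients via $\padic(l!)$, $\padic(n_1-n_2)$, $\padic(m)$, $\padic(n_j)$, and splitting off pairs with anomalously large valuations) is exactly what happens in the paper. Writing $m = x + \tau_s y$ and shifting $y$ by $z_1 z_2 \le P$, as the paper does, is equivalent to your shift $m \mapsto m + uvT$; the binomial expansion lands on the same polynomial structure.

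The gap is in where you put the $Nq^{8G}$ term. You claim the exceptional off-diagonal pairs---those with too much $q$-adic shrinkage for Lemma~\ref{lem:Kor}---are bounded trivially and absorbed into $\|\balpha\|_\infty\|\bbeta\|_\infty Nq^{8G}$. That cannot work. Those pairs (where $\padic(n_j) > s$ or $g^{n_1\tau_s}\equiv g^{n_2\tau_s}\bmod q^{2s}$) number $O(N^2/q^s + N)$, with $q^s \asymp M^{1/8}$; bounding each $W(n_1,n_2)$ trivially by $M$ contributes $O(M^{7/8}N^2 + MN)$ inside the Cauchy--Schwarz inequality, hence $O(M^{7/16}N + M^{1/2}N^{1/2})$ after the square root---not $O(Nq^{8G})$. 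What actually happens is that $M^{7/16}N$ is dominated by the main term $M^{1/2-10^{-10}\rho^2}N\log M$ (since $10^{-10}\rho^2 < 1/16$), and $M^{1/2}N^{1/2}$ is already present. The $Nq^{8G}$ term arises elsewhere: one must first dispose of the range $M \le q^{8G}$ by the trivial bound $|S| \le \|\balpha\|_\infty\|\bbeta\|_\infty N q^{8G}$, precisely because the whole $\tau_s$ mechanism requires $s = \fl{\rho\gamma/8} \ge G$ so that $\mathfrak g_s = 0$ in~\eqref{eq:infinity-war} and $g^{\tau_s} = 1 + h_sq^s$ with $\gcd(h_s,q)=1$; this forces $M \ge q^{8G}$ before the argument can even start. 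If you attempt to carry out your plan as written you will be unable to make the exceptional-pair contribution this small, and you will also hit a wall trying to choose $s \ge G$ when $M$ is tiny.
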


\begin{proof} To simplify the notation, we write
$$
\cM=\{K+1,\ldots,K+M\}\mand\cN=\{L+1,\ldots,L+N\}.
$$
First note we may assume 
\begin{equation}
\label{eq:MLB}
M\ge (\log{q^{\gamma}})^{32},
\end{equation}
as otherwise 
$$
M^{\rho^2}\ll 1,
$$
and hence for the first term in the bound for $S$
$$
\|\balpha\|_{2}\|\bbeta\|_\infty   M^{1/2-10^{-10}\rho^2}N\log{M}\gg \|\balpha\|_{2}\|\bbeta\|_\infty M^{1/2}N\log{M},
$$
which is worse than trivial. If
$$
M\le q^{8G},
$$
then we have
$$
S\le \sum_{m=K+1}^{K+M}  \sum_{n=L+1}^{L+N}  
|\alpha_m||\beta_n| \le \|\balpha\|_{\infty}\|\bbeta\|_{\infty}Nq^{8G}.
$$
Hence we may assume
$$
M\ge q^{8G}.
$$
By the Cauchy--Schwarz inequality
\be\label{eq:afterCS}
\begin{split}
|S|^2 & \le \|\balpha\|_2^2 \sum_{m\in\cM}\left|\sum_{n\in\cN}\beta_n\e_{q^\gamma}(zg^{mn})\right|^2\\
& \le  \|\balpha\|_2^2 \sum_{n_1,n_2\in\cN} |\beta_{n_1}| |\beta_{n_2}||S(n_1,n_2)|\\
& \le   \|\balpha\|_2^2  \|\bbeta\|_\infty^2 \sum_{n_1,n_2\in\cN} |S(n_1,n_2)|,
\end{split} 
\ee
where 
$$
S(n_1,n_2)=\sum_{m\in\cM} \e_{q^\gamma}(z(g^{n_1m}-g^{n_2m})).
$$

Recall we are assuming
\be\label{eq:case1}
\rho\le\frac{2}{65}.
\ee
Define $s$ by 
\be
\label{eq:bilinearsdef}
s=\left\lfloor \frac{\rho \gamma}{8} \right\rfloor
=\left\lfloor\frac{1}{8}\frac{\log M}{\log q}\right\rfloor \ge G,
\ee
so that from~\eqref{eq:infinity-war}, we have
\be\label{eq:tau_is_small}
\tau_s \le q^{s}\le M^{1/8},
\ee
and
\be\label{eq:p^s}
q^s>\frac{M^{1/8}}{q}\gg M^{1/8},
\ee
with implied constant depending on $q$. To establish the desired result we bound $S(n_1,n_2)$ in different ways
as the pair $(n_1,n_2)$ varies over $\cN\times\cN$.

We denote
\begin{align*}
\cA_1&=\{(n_1,n_2)\in\cN\times\cN:\padic(n_1)>s\text{~or~}\padic(n_2)>s\},\\
\cA_2&=\{(n_1,n_2)\in\cN\times\cN:g^{n_1\tau_s}\equiv g^{n_2\tau_s}\bmod{q^{2s}}\},\\
\cA_3&=(\cN\times\cN)\setminus(\cA_1\cup\cA_2).
\end{align*}
Clearly, 
$$
\# \cA_1\le 2N^2/q^s,
$$ and Lemma~\ref{lem:frog1} implies
that 
$$
\# \cA_2\le N^2/q^{s}+N.
$$ 
Thus using the trivial bound
$|S(n_1,n_2)|\le M$ along with~\eqref{eq:p^s} we get that
\be
\label{eq:mathcomp}
\begin{split}
\sum_{j=1,2}\sum_{(n_1,n_2)\in\cA_j}|S(n_1,n_2)|& \ll
  \(\frac{MN^2}{q^s}+MN\)\\
& \ll   \(N^2 M^{7/8}+MN\).
\end{split} 
\ee

For the final set $\cA_3$, we need a nontrivial bound on $S(n_1,n_2)$.
Let $(n_1,n_2)\in\cA_3$ be fixed.  Since $|S(n_1,n_2)|=|S(n_2,n_1)|$,
without loss of generality we can assume
\be\label{eq:ordn12}
\padic(n_1)=a,\qquad \padic(n_2)=b,\qquad a\le b\le s.
\ee
With $a$ and $b$ fixed for the moment, it is convenient to define
\be\label{eq:kPdefs}
k=\fl{\frac\gamma{s+a}}\mand P=q^{s+a}.
\ee
Using the definition of $s$ along with~\eqref{eq:case1} and~\eqref{eq:tau_is_small}
we see that
\be\label{eq:k_is_large_2}
 k\ge 129 \mand P\le q^{2s} \le M^{1/4}.
\ee
Now put $\lambda=g^{n_1}$ and $\mu=g^{n_2}$, so that
$$
S(n_1,n_2)=\sum_{m\in\cM}\e_{q^\gamma}(z(\lambda^m-\mu^m)).
$$
Using~\eqref{eq:gtn},~\eqref{eq:bilinearsdef} and~\eqref{eq:ordn12} it is easy to see that the relations
\be\label{eq:lmt}
\lambda^{\tau_s}=1+uq^{s+a}\mand\mu^{\tau_s}=1+vq^{s+b}
\ee
hold with some integers $u,v$ coprime to $q$.
Partitioning the summation over $m$ into distinct residue
classes modulo $\tau_s$ leads to the estimate
\be\label{eq:S0}
S(n_1,n_2)=S_0(n_1,n_2)+O(\tau_s)=S_0(n_1,n_2)+O(M^{1/8})
\ee
by~\eqref{eq:tau_is_small}, where
$$
S_0(n_1,n_2)=\sum_{x=1}^{\tau_s}
\sum_{y\in\cY}\e_{q^\gamma}(z(\lambda^{x+\tau_sy}-\mu^{x+\tau_sy})),
$$
and
$$
\cY=\big(K/\tau_s,(K+M)/\tau_s\big]\cap\Z.
$$
By~\eqref{eq:lmt} we have 
\begin{align*}
\lambda^{x+\tau_sy}-\mu^{x+\tau_sy}&=\lambda^x(1+uq^{s+a})^y-\mu^x(1+vq^{s+b})^y \\ 
&=\lambda^x\sum_{i=0}^y\binom{y}{i}u^{i}q^{(s+a)i}-\mu^x
\sum_{i=0}^y\binom{y}{i}v^{i}q^{(s+b)i}\\
&\equiv \lambda^x-\mu^x+
\sum_{i=1}^\kdx q^{(s+a)i}(\lambda^xu^i-\mu^xv^iq^{\Delta i})\binom{y}{i}
\bmod{q^\gamma},
\end{align*}
where we have put $\Delta=b-a$ (note that~\eqref{eq:kPdefs} is used
in the last step); therefore,
$$
|S_0(n_1,n_2)|\le \sum_{x=1}^{\tau_s}\left|
\sum_{y\in\cY}\e_{q^\gamma}
\(\,\sum_{i=1}^\kdx q^{(s+a)i}
(\lambda^xu^i-\mu^xv^iq^{\Delta i})\binom{y}{i}\)\right|.
$$
We apply Lemma~\ref{lem:sum-convert} with the function
$$
f(y)=\sum_{i=1}^\kdx q^{(s+a)i}
(\lambda^xu^i-\mu^xv^iq^{\Delta i})\binom{y}{i},
$$
and parameters
$$
U=V=P, \quad \cI=\cY,
$$
and note by~\eqref{eq:tau_is_small} and~\eqref{eq:k_is_large_2} 
$$
P^2\le M^{1/2}\le M^{7/8}\le \frac{M}{\tau_s}\le \# \cY+1.
$$
It follows that
\begin{equation}\label{eq:sum_split}
\begin{split} 
S_0&(n_1,n_2) \\&\ll \frac{\log{M}}{P^2}\sum_{x=1}^{\tau_s}\sum_{y\in\cZ}\sum_{z_1=1}^{P}
\left|\sum_{z_2=1}^{P} 
 \e(\alpha_x z_2) \e_{q^\gamma}(f(y+z_1z_2))\right| \\
& \ll \frac{\log{M}}{P^2}\sum_{x=1}^{\tau_s}\sum_{y\in\cZ}
\left|\sum_{z_1=1}^{P}\sum_{z_2=1}^{P}  \e(\alpha_x z_2)\beta_{x,y}(z_1)
\e_{q^\gamma}(f(y+z_1z_2)) \right|, 
\end{split} 
\end{equation}
where $\cZ$ is an interval of length $O(M/\tau_s)$ 
and $\alpha_x$ 
may depend on the variable $x$
and $\beta_{x,y}$ may depend on the variables $x$ and $y$ and satisfies 
$$
|\beta_{x,y}(z_1)|=1.
$$
With the intention of applying Lemmas~\ref{lem:Ford} and~\ref{lem:Kor}
to the right side of~\eqref{eq:sum_split}, we
fix $y$ for the moment and write
$$
k!  f(y+Z)=\sum_{j=0}^k a_jZ^j\qquad(a_j\in\Z)
$$
and for each $i=1,\ldots, k$,
\be\label{eq:expand2}
k!  q^{(s+a)i}(\lambda^xu^i-\mu^xv^iq^{\Delta i})\binom{y+Z}{i}
=\sum_{j=1}^i a_{i,j}Z^j
\ee
with some $a_{i,j}\in\Z$.
Clearly, 
$$
a_j=\sum_{i=j}^k a_{i,j},
$$
 and thus
\be\label{eq:padics-aij}
\padic(a_j)\ge\min\{\padic(a_{i,j}):~i=j,\ldots,k\}.
\ee
Moreover, equality holds in~\eqref{eq:padics-aij} whenever
\be\label{eq:padic-cond}
\padic(a_{j,j})<\padic(a_{i,j})\qquad(i>j).
\ee
Denote
$$
\overline\nu=\min\left\{\padic(\lambda^xu^j-\mu^xv^jq^{\Delta j})
:~j=1,\ldots,k\right\},
$$
and let $\overline{j}$ be an index for which
\be\label{eq:optimal-j}
\padic(\lambda^xu^{\overline{j}}-\mu^xv^{\overline{j}}q^{\Delta {\overline{j}}})=\overline\nu.
\ee
From~\eqref{eq:expand2} it is clear that
$$
a_{{\overline{j}},{\overline{j}}}=\frac{k!}{{\overline{j}}!} \, q^{(s+a)\overline{j}}(\lambda^xu^{\overline{j}}-\mu^xv^{\overline{j}}q^{\Delta {\overline{j}}}),
$$
and therefore
\be\label{eq:padic-ajj}
\padic(a_{{\overline{j}},{\overline{j}}})=\padic(k!)-\padic({\overline{j}}!)+(s+a){\overline{j}}+\overline\nu.
\ee
On the other hand,~\eqref{eq:expand2} implies
\be\label{eq:padic-aij}
\padic(a_{i,{\overline{j}}})\ge\padic(k!)-\padic(i!)+(s+a)i+\overline\nu
\qquad(i>{\overline{j}}).
\ee
Before we proceed, we note that the 
estimate ${\overline{j}}<i\le k<q^{s+a}$ holds since by~\eqref{eq:MLB},~\eqref{eq:bilinearsdef} and~\eqref{eq:kPdefs} we have
\begin{equation}
\label{eq:kbounds}
k\le \frac{\gamma}{s}\le \frac{2\gamma}{s+1} < \frac{16}{\rho}=\frac{16\log{q^{\gamma}}}{\log{M}}\le  M^{1/32}< q^{s}\le q^{s+a}.
\end{equation}
This implies the inequality
$$
(s+a)(i- {\overline{j}})>\padic(i(i-1)\cdots  (\overline{j}+1))=\padic(i!)-\padic({\overline{j}}!),
$$
which together with~\eqref{eq:padic-ajj} and~\eqref{eq:padic-aij}
verifies the condition~\eqref{eq:padic-cond} for any ${\overline{j}}$ satisfying~\eqref{eq:optimal-j}. 
Hence,~\eqref{eq:padics-aij} holds with equality, and thus we have
\be\label{eq:optimal-j-two}
\padic(a_{\overline{j}})=\padic(k!)-\padic({\overline{j}}!)+(s+a){\overline{j}}+\overline\nu
\ee
for any ${\overline{j}}$ satisfying~\eqref{eq:optimal-j}.

If $\Delta>0$, then clearly
$$
\padic(\lambda^xu^j-\mu^xv^jq^{\Delta j})=0\qquad(j\ge 1).
$$
For $\Delta=0$ (that is, $a=b$) we claim that for any two consecutive
indices $j$ and $j+1$,
\be\label{eq:parity}
\padic(\lambda^xu^j-\mu^xv^j)= \overline\nu \qquad \text{or} \qquad \padic(\lambda^xu^{j+1}-\mu^xv^{j+1})=\overline\nu.
\ee
To prove the claim, suppose on the contrary that
$$
\lambda^xu^j \equiv \mu^xv^j  \mod q^{\overline\nu+1} \mand \lambda^xu^{j+1}\equiv \mu^xv^{j+1}
\mod q^{\overline\nu+1}
$$
for some $j$. Then, dividing the second conruence by the fisrt one, we get
$u\equiv v \mod q^{\overline\nu+1}$ and thus
$$
\lambda^xu^j \equiv \mu^xv^j  \mod q^{\overline\nu+1} \quad \text{for all } j,
$$
which contradicts the definition of~$\overline\nu$.

Now let
$$
\cJ=\left\{(k+1)/2\le j\le k:~
\padic(\lambda^xu^j-\mu^xv^jq^{\Delta})=\overline\nu\right\}.
$$
In view of~\eqref{eq:parity} this implies that $\# \cJ\ge\fl{k/4}$. 
Since $\lambda^x-\mu^x=g^{n_1x}-g^{n_2x}$
and $n_1\ne n_2$ (in fact, $\padic(n_1-n_2)<s$ by Lemma~\ref{lem:frog1}
since $(n_1,n_2)\not\in\cA_2$), by Lemma~\ref{lem:frog2} and inequalities \eqref{eq:bilinearsdef} and \eqref{eq:tau_is_small} we have
$$
\padic(\lambda^x-\mu^x)=0\qquad\text{or}\qquad
\padic(\lambda^x-\mu^x)=\padic(n_1-n_2)+\padic(x)+\gdx\le 3s;
$$
this implies that $\overline\nu\le 3s$.
Thus, for every $j\in\cJ$ we have by~\eqref{eq:optimal-j-two}:
$$
(s+a)j\le\padic(a_j)\le\padic(k!)+(s+a)j+3s,
$$
and so (recalling that $P=q^{s+a}$) we can write
\be \label{eq:qj}
\frac{a_j}{k!q^\gamma}=\frac{b_j}{q_j}
\ee
with
\be \label{eq:fraction}
\gcd(b_j,q_j)=1   \mand 
P^{-j}q^{\gamma-3s}\le q_j\le k!P^{-j}q^\gamma.
\ee 

We also define $q_j $  by~\eqref{eq:qj} for $j \not \in \cJ$. 

We are now in a position to apply Lemmas~\ref{lem:Ford} and~\ref{lem:Kor}
in order to bound the double sum over $z_1$ and $z_2$ in~\eqref{eq:sum_split}.
Writing
\begin{align*}
T&=\sum_{z_1,z_2=1}^{P}\beta_{x,y}(z_1)\alpha_x(z_2)\e_{q^\gamma}(f(y+z_1z_2))
 \\ & =\sum_{z_1,z_2=1}^{P}\beta_{x,y}(z_1)\alpha_x(z_2)\e\bigg(\sum_{j=1}^k \frac{b_j}{q_j}(z_1z_2)^j\bigg),
\end{align*}
Lemma~\ref{lem:Kor} shows that for any natural number $r$, the bound
\begin{align*}
|T|^{2r^2}\le \(64r^2\log(3Q)\)^{k/2}&
P^{4r^2-2r} N_{r,k}(P)\\
& \prod_{j=1}^k
\min\left\{P^j,P^jq_j^{-1/2}+q_j^{1/2}\right\}
\end{align*}
holds with
$Q=\max\limits_{1\le j\le k} q_j$. Note that~\eqref{eq:kbounds} and~\eqref{eq:fraction} imply that 
$$
\log(3Q)\le\log(3k!q^\gamma)\le  \gamma\log (kq) \le \gamma k \log q
$$
since for $129 \le k \le \gamma$ we have $3k! \le k^k \le k^\gamma$. 
Moreover, since $k\ge 129$ (see~\eqref{eq:k_is_large_2})
Lemma~\ref{lem:Ford} shows that we can choose the integer
$r\in [2k^2,4k^2]$ so that
$$
N_{r,k}(P)\le k^{3k^3}P^{2r-k(k+1)/2+k^2/1000}.
$$
Hence we find that
\be\label{eq:sigma2r}
|T|^{2r^2}\le \(1024\gamma k^{5}\log q\)^{k/2}
k^{3k^3+3k}P^{4r^2-k(k+1)/2+k^2/1000}R,
\ee 
where
\begin{align*}
R& =\prod_{j=1}^k\min\left\{P^j,P^jq_j^{-1/2}+q_j^{1/2}\right\}\\
&=
P^{k(k+1)/2}\prod_{j=1}^k\min\left\{1,q_j^{-1/2}+P^{-j}q_j^{1/2}\right\}.
\end{align*}
For any $j\in\cJ$ we have $j\ge (k+1)/2$. Recalling~\eqref{eq:kPdefs},  we have
$$
P^{-j}\le P^jq^{-\gamma};
$$
thus, using~\eqref{eq:fraction} we see that
\begin{align*}
q_j^{-1/2}+P^{-j}q_j^{1/2} & \le P^{j/2}q^{-\gamma/2+3s/2}
+(k!)^{1/2}P^{j/2}q^{-\gamma/2}\\
& \le k^kP^{j/2}q^{-\gamma/2+3s/2}.
\end{align*}
For  $j\not \in\cJ$ we use the trivial bound 
$$
\min\left\{1,q_j^{-1/2}+P^{-j}q_j^{1/2}\right\}\le 1.
$$ 

Therefore, recalling that $\# \cJ\ge\fl{k/4}$,  
and using the bounds
$$
0.24k<\fl{k/4}\le k/4\mand\sum_{j=k-\fl{k/4}+1}^k j/2<0.11k^2
$$
which hold for $k\ge 129$,
we see that
\begin{align*}
R&\le 
P^{k(k+1)/2}\prod_{j\in\cJ}
\(k^kP^{j/2}q^{-\gamma/2+3s/2}\)\\
&\le k^{k^2}P^{k(k+1)/2}\prod_{j=k-\fl{k/4}+1}^k
\(P^{j/2}q^{-\gamma/2+3s/2}\)\\
&\le k^{k^2}P^{k(k+1)/2+0.11k^2}q^{-0.12\gamma k+3sk/8}.
\end{align*}
Combining this bound with~\eqref{eq:sigma2r} we deduce that
\be
\label{eq:TABC}
|T|\le (ABC)^{1/2r^2}P^2,
\ee
where
$$
A=2^{5k}k^{3k^3+k^2+11k/2},\quad
B=\(\gamma\log q\)^{k/2},\quad
C=P^{0.111k^2}q^{-0.12\gamma k+3sk/8}.
$$
Since $r\geq 2k^2$  
it is clear that 
\be
\label{eq:Abound}
A^{1/2r^2}\ll 1.
\ee
Next, since $k\asymp\gamma/s\asymp\rho^{-1}$ we have
$$
\gamma\log q=\rho^{-1}\log M\ll k\log M,
$$
hence
\be
\label{eq:Bbound}
B^{1/2r^2}\ll(k \log M)^{1/8k^4}\ll \log{M}.
\ee 
Recalling~\eqref{eq:bilinearsdef} and~\eqref{eq:kPdefs}, we have
$$\frac{\gamma}{s+a}-1<k\le\frac{\gamma}{s+a} \quad  \text{and} \quad  \frac{\gamma}{s}\ge \frac{8}{\rho},$$ 
and using~\eqref{eq:bilinearsdef}, we get that
\begin{align*}
\frac{\log C}{\log q}&=0.111(s+a)k^2-0.12\gamma k+3sk/8\\
&\le -\frac{0.009\gamma^2}{s+a}+0.12\gamma+\frac{3s\gamma}{8(s+a)}\\
&\le -\frac{0.009\gamma^2}{s}+0.12\gamma+\frac{3\gamma}{8}\\
& \le -\frac{0.036 \gamma}{\rho}+0.495 \gamma \le -\frac{0.02 \gamma}{\rho},
\end{align*}
where we have used the inequality $\rho\le 2/65$ in the last step;
thus, 
\be 
\label{eq:C prelim}
C\le M^{-0.02/\rho^2}.
\ee  
Since
$$ r \le 4k^2,$$
we have 
\begin{equation}\label{eq:kin0}
\frac{0.02}{2r^2\rho^2}\ge \frac{1}{1600 \rho^2 k^4},
\end{equation}
and from~\eqref{eq:bilinearsdef} and~\eqref{eq:kPdefs}
\begin{equation}
\label{eq:kin1}
k\le \frac{\gamma}{s+a}\le \frac{\gamma}{\rho\gamma/8-1}.
\end{equation} 

Since 
$$
\rho\gamma=\frac{\log{M}}{\log{q}},
$$
and we allow the implied constant in the statement of Proposition~\ref{lem:bilinear} to depend on $q$, 
we may assume that  $M \ge q^{16}$ and thus
$$
\rho \gamma \ge 16,
$$
which combined with~\eqref{eq:kin1} implies 
$$
k\le \frac{16}{\rho},
$$
and hence by \eqref{eq:kin0}
$$
\frac{0.02}{2r^2\rho^2}\ge  \frac{1}{1600 k^4 \rho^2} \ge  \frac{1}{25\cdot 2^{22}} \rho^2 \ge  10^{-9}\rho^2.
$$

Substituted in~\eqref{eq:C prelim}, this gives  
$$C^{1/2r^2}\le M^{-10^{-9}\rho^2}.$$ 
Combining the above with~\eqref{eq:TABC},~\eqref{eq:Abound} and~\eqref{eq:Bbound} we get 
$$
T\ll P^2M^{-10^{-9}\rho^2} \log{M}.
$$

Inserting the previous bound into~\eqref{eq:sum_split}
and using~\eqref{eq:k_is_large_2} we have
$$
S_0(n_1,n_2) \ll \tau_s \# \cY M^{-10^{-9}\rho^2} (\log{M})^2
\ll M^{1-10^{-9}\rho^2} (\log{M})^2,
$$  
since 
$$\tau_s \# \cY \ll M.$$
Combining with~\eqref{eq:S0} implies that
\be\label{eq:final}
S(n_1,n_2)\ll M^{1-10^{-9}\rho^2}  (\log{M})^2.
\ee
Now~\eqref{eq:afterCS}, \eqref{eq:mathcomp} and~\eqref{eq:final} together yield the bound
\begin{align*}
S& \ll   \| \balpha\|_2  \|\bbeta\|_\infty NM^{1/2-10^{-10}\rho^2} \log{M}\\
& \qquad  \qquad  \qquad + \| \balpha\|_2 \|\bbeta\|_\infty \(M^{7/16} N +M^{1/2}N^{1/2}\),
\end{align*} 
since $ M^{7/16}N$ never dominates the term $NM^{1/2-10^{-10}\rho^2} \log{M}$,  we obtain the  desired result.
\end{proof}
We may remove the condition $M\le q^{2\gamma/65}$ by partitioning the summation 
over $M$ into short intervals and this is done for applications to Theorem~\ref{thm:MersDig} where we need to considered both large and short ranges of the parameter $M$.

\begin{cor}\label{cor:bilinear}
Let $\gamma\in\N$ with $\gamma>16G$ and let $A>0$ be arbitrary.
Given integers $K,L\ge 0$ and $M,N\ge 1$ with
\begin{equation}
\label{eq:Mubdef}
M\le q^{A\gamma},
\end{equation}
two sequences of complex weights
$$
\balpha = (\alpha_m)_{m=K+1}^{K+M} \mand 
\bbeta = (\beta_n)_{n=L+1}^{L+N}
$$ 
and an integer $z$ not divisible by $q$, 
for the sum
$$
S =  \sum_{m=K+1}^{K+M}  \sum_{n=L+1}^{L+N}  
\alpha_m\beta_n\e_{q^\gamma}(zg^{mn}), 
$$
we have
\begin{align*}
S\ll 
  \|\balpha\|_{2}\|\bbeta\|_\infty & \( M^{1/2-c\rho^2}N   \log{M} + M^{1/2}N^{1/2}\) \\ 
  &  \qquad \qquad\qquad\qquad +\left(1+\frac{M}{q^{2\gamma/65}}\right)\|\balpha\|_{\infty}\|\bbeta\|_{\infty}Nq^{8G},
\end{align*}
where $$
\rho=\frac{\log M}{\log q^\gamma}
$$
and $c>0$ is a constant depending on $A$.
\end{cor}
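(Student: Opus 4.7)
The plan is to reduce to Proposition~\ref{lem:bilinear} by partitioning the outer sum into blocks short enough for that proposition to apply. When $M \le q^{2\gamma/65}$, Proposition~\ref{lem:bilinear} is directly applicable (with any $c \le 10^{-10}$) and the error term there is dominated by the factor $(1+M/q^{2\gamma/65})$ appearing in the claim. For the complementary range $M > q^{2\gamma/65}$, I would set $M_0 = \lfloor q^{2\gamma/65}\rfloor$ and split the outer range $[K+1, K+M]$ into $J = \lceil M/M_0 \rceil$ consecutive subintervals $I_1,\dots,I_J$, each of length at most $M_0$. Writing $\balpha^{(j)}$ for the restriction of $\balpha$ to $I_j$ and applying Proposition~\ref{lem:bilinear} to each block with $M$ replaced by $M_0$ and $\rho$ replaced by $\rho_0 = \log M_0/\log q^\gamma$ (which is pinned near $2/65$), the contribution from $I_j$ is
$$
\|\balpha^{(j)}\|_{2}\|\bbeta\|_\infty\bigl(M_0^{1/2-10^{-10}\rho_0^2}N\log M_0 + M_0^{1/2}N^{1/2}\bigr) + \|\balpha^{(j)}\|_\infty\|\bbeta\|_\infty N q^{8G}.
$$

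Next I would sum over $j = 1,\dots,J$, using Cauchy-Schwarz $\sum_j \|\balpha^{(j)}\|_2 \le J^{1/2}\|\balpha\|_2$ on the first two terms and $\|\balpha^{(j)}\|_\infty \le \|\balpha\|_\infty$ on the last. Since $J^{1/2}M_0^{1/2} \ll M^{1/2}$, the $M_0^{1/2}N^{1/2}$ contribution collapses to $\|\balpha\|_2\|\bbeta\|_\infty M^{1/2}N^{1/2}$; the error contribution is bounded by $J\|\balpha\|_\infty\|\bbeta\|_\infty Nq^{8G} \ll (1 + M/q^{2\gamma/65})\|\balpha\|_\infty\|\bbeta\|_\infty Nq^{8G}$; and the main term becomes
$$
\|\balpha\|_2\|\bbeta\|_\infty M^{1/2}M_0^{-10^{-10}\rho_0^2} N\log M,
$$
where I use $\log M_0 \le \log M$ to absorb the logarithm.

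The main obstacle, and the only delicate step, is to convert the block-wise savings factor $M_0^{-10^{-10}\rho_0^2}$, in which $\rho_0$ is essentially an absolute constant, into the required $M^{-c\rho^2}$, where $\rho = \log M/\log q^\gamma$ can be as large as $A$. Writing both as powers of $q^\gamma$, the former equals $q^{-c_0\gamma}$ for an absolute constant $c_0 > 0$, while the latter equals $q^{-c\rho^2\gamma}$. The inequality $c\rho^2 \le c_0$ needed to ensure $M_0^{-10^{-10}\rho_0^2} \le M^{-c\rho^2}$ is secured by the hypothesis $\rho \le A$ from~\eqref{eq:Mubdef} on taking $c = c_0/A^2$, and this is how the $A$-dependence of the constant enters the statement.
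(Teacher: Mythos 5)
Your proof is correct and takes essentially the same approach as the paper: split the $m$-range into blocks of length about $M_0=\lfloor q^{2\gamma/65}\rfloor$, apply Proposition~\ref{lem:bilinear} to each block, and recombine via Cauchy--Schwarz (the paper pads $\balpha$ with zeros so that $M=JM_0$ exactly and applies Cauchy--Schwarz in the form $|S|^2\le J\sum_j|S_j|^2$, which is equivalent to your $\sum_j\|\balpha^{(j)}\|_2\le J^{1/2}\|\balpha\|_2$). One small arithmetic slip in your final conversion: since $M=q^{\gamma\rho}$ we have $M^{-c\rho^2}=q^{-c\gamma\rho^3}$ rather than $q^{-c\gamma\rho^2}$, so the comparison with $M_0^{-10^{-10}\rho_0^2}=q^{-c_0\gamma}$ requires $c\rho^3\le c_0$, giving $c=c_0/A^3$ rather than $c_0/A^2$; this changes only the numerical value of $c$, not the fact that it depends only on $A$, so the conclusion stands.
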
 

\begin{proof}
By Proposition~\ref{lem:bilinear} we may assume $M\ge q^{2\gamma/65}$, 
and by modifying the coefficients $\balpha$  
(appending them with at most  $\lfloor q^{2\gamma/65}\rfloor$ zeros) 
we may assume 
\begin{equation}
\label{eq:MJb}
M=JM_0, \quad \text{with} \quad  M_0=\lfloor q^{2\gamma/65}\rfloor
\end{equation} 
for some integer $J\geq 1$.
Subdividing $S$ into $J$ sums
$$
S_j=\sum_{m=K+1+M_0j}^{K+M_0(j+1)}  \sum_{n=L+1}^{L+N}  
\alpha_m\beta_n\e_{q^\gamma}(zg^{mn}),
$$
by the the Cauchy--Schwarz inequality and   Proposition~\ref{lem:bilinear} (applied for each $0\le j \le J-1$),
denoting 
where 
\begin{equation}\label{eq:rho_0}
\rho_0=\frac{\log M_0}{\log q^{\gamma}}.
\end{equation}
we obtain 
\begin{align*}
|S|^2& \le J\sum_{j=0}^{J-1}|S_j|^2\\
& \ll  J
 \|\bbeta\|^2_\infty \sum_{j=0}^{J-1} \sum_{m=K+1+M_0j}^{K+M_0(j+1)}|\alpha_m|^{2} \\
 & \qquad\qquad  \qquad\qquad  \( M_0^{1-2\cdot 10^{-10}\rho_0^2}N^2   \log^2{M} +
  q^{2\gamma/65}N\) \\ 
 & \qquad \qquad\qquad\qquad \qquad\qquad  \qquad\qquad  +  J^2\|\balpha\|^2_{\infty}\|\bbeta\|^2_{\infty}N^2q^{16G}\\
 &\ll 
  \|\balpha\|^2_{2}\|\bbeta\|^2_\infty   \( Jq^{2\gamma(1-2\cdot 10^{-10}\rho_0^2)/65}N^2   \log^2{M} + Jq^{2\gamma/65}N\) \\ 
  & \qquad \qquad\qquad\qquad \qquad\qquad  \qquad\qquad +J^2\|\balpha\|^2_{\infty}\|\bbeta\|^2_{\infty}N^2q^{16G}\\
 & \ll 
  \|\balpha\|^2_{2}\|\bbeta\|^2_\infty   \( Mq^{-2\cdot 10^{-10}\gamma \rho_0^2/65}N^2   \log^2{M} + MN\) \\ 
  & \qquad \qquad\qquad\qquad \qquad\qquad \qquad\qquad +\|\balpha\|^2_{\infty}\|\bbeta\|^2_{\infty}N^2\frac{q^{16G}M^2}{q^{4\gamma/65}}.
\end{align*}
By~\eqref{eq:Mubdef}, \eqref{eq:MJb} and \eqref{eq:rho_0} we have 
$$
q^{10^{10}\gamma \rho_0^2}\ge M^{c \rho^2},
$$
for some constant $c$ depending on $A$. Hence 
\begin{align*}
|S|\ll 
  \|\balpha\|_2\|\bbeta\|_\infty & \( M^{1/2-c\rho^2}N   \log^2{M} + M^{1/2}N^{1/2}\) \\ 
  & \qquad \qquad\qquad\qquad +\|\balpha\|_{\infty}\|\bbeta\|_{\infty}N\frac{q^{8G}M}{q^{2\gamma/65}},
\end{align*}
which completes the proof.
\end{proof}

We now estimate double sums with variables limits of summation for one variable. 

\begin{lemma}
\label{lem:DoubleExpVarLim}
Let $\gamma\in\N$ with $\gamma>16G$ and let $A>0$ be arbitrary.
Given integers   $M,N\ge 1$ and $L\geq 0$ with
$$
M\le q^{A\gamma}, 
$$
two sequences
$$
(K_m)_{m=1}^M \mand (N_m)_{m=1}^M
$$
of nonnegative integers such that $K_m< N_m\le N$ for each $m$,
two sequences of complex weights
$$
\balpha = (\alpha_m)_{m=1}^{M} \mand 
\bbeta = (\beta_n)_{n=1}^{N}
$$ 
with 
$$
 \| \balpha\|_\infty,  \|\bbeta\|_\infty \ll 1
$$
and an integer $z$ not divisible by $q$, 
for the sum
$$
\widetilde S =  \sum_{m= L+1}^{L+M}  \sum_{K_m \le n \le N_m} \alpha_m \beta_n \e_{q^\gamma}(zg^{mn})  
$$
we have
$$
\widetilde S \ll 
  \left(NM^{1-c\rho^2}+N^{1/2}M\right) \log{M}\log{N} +\left(1+\frac{M}{q^{2\gamma/65}}\right)Nq^{8G}\log{N},
$$
where  $$
\rho=\frac{\log M}{\log q^\gamma}
$$
and $c>0$ is a constant depending on $A$.
\end{lemma}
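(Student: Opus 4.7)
The plan is to reduce $\widetilde S$ to a short linear combination of double sums with constant limits of summation, to which Corollary~\ref{cor:bilinear} can be applied directly. The tool is the standard completion identity, valid for $n\in\{0,1,\ldots,N\}$:
\begin{equation*}
\ind{K_m \le n \le N_m} = \frac{1}{N+1}\sum_{h=0}^{N}\psi_h(m)\,\e\!\left(\frac{hn}{N+1}\right),
\end{equation*}
where
$$
\psi_h(m) = \sum_{K_m \le k \le N_m}\e\!\left(\frac{-hk}{N+1}\right)
$$
is a geometric-series sum satisfying $|\psi_h(m)| \ll (N+1)/h$ for $1 \le h \le N/2$ (with the symmetric bound near $h=N$), while $|\psi_0(m)| \le N+1$; consequently $(N+1)^{-1}\sum_{h=0}^{N}\max_{m}|\psi_h(m)| \ll \log N$. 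Inserting this identity into $\widetilde S$ and interchanging the order of summation expresses
\begin{equation*}
\widetilde S = \frac{1}{N+1}\sum_{h=0}^{N}\sum_{m=L+1}^{L+M}\gamma_h(m)\sum_{n=0}^{N}\delta_h(n)\,\e_{q^{\gamma}}(zg^{mn}),
\end{equation*}
with $\gamma_h(m) = \alpha_m\psi_h(m)$ and $\delta_h(n) = \beta_n\e(hn/(N+1))$.

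For each fixed $h$, I would apply Corollary~\ref{cor:bilinear} to the inner double sum. Setting $\Psi_h = \max_m|\psi_h(m)|$, we have $\|\delta_h\|_\infty = \|\bbeta\|_\infty \ll 1$ (since the extra phase is of unit modulus), $\|\gamma_h\|_2 \le \Psi_h\|\balpha\|_2 \ll \Psi_h M^{1/2}$, and $\|\gamma_h\|_\infty \ll \Psi_h$. Corollary~\ref{cor:bilinear} then yields
\begin{align*}
\biggl|\sum_{m,n}\gamma_h(m)\delta_h(n)\e_{q^{\gamma}}(zg^{mn})\biggr|
&\ll \Psi_h M^{1/2}\bigl(M^{1/2-c\rho^2}N\log M + M^{1/2}N^{1/2}\bigr) \\
&\quad + \Psi_h\bigl(1+M/q^{2\gamma/65}\bigr)Nq^{8G}.
\end{align*}
Summing over $h\in\{0,1,\ldots,N\}$, dividing by $N+1$, and invoking $(N+1)^{-1}\sum_h\Psi_h \ll \log N$ produces the additional $\log N$ factor and recovers exactly the bound asserted in Lemma~\ref{lem:DoubleExpVarLim}.

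The key step is the completion identity, which is routine; I do not expect a substantive obstacle. The only point that requires care is to keep the $L^2$ and $L^\infty$ norms of $\gamma_h$ distinct in the bookkeeping, because the two terms of Corollary~\ref{cor:bilinear} weigh $\gamma_h$ in these two different norms, and to note that multiplying $\bbeta$ by the unit-modulus phase $\e(hn/(N+1))$ preserves the hypothesis $\|\delta_h\|_\infty \ll 1$ needed to apply the corollary for every~$h$.
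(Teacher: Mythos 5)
Your proposal is correct and follows essentially the same route as the paper: both complete the variable inner summation range via a discrete Fourier expansion of the indicator $\ind{K_m\le n\le N_m}$, pick up a $\log N$ from the $\ell^1$-weight of the completing kernel, and then apply Corollary~\ref{cor:bilinear} to each resulting bilinear sum with constant limits. The only cosmetic difference is bookkeeping: the paper moves the kernel's decay $1/(|r|+1)$ outside so that the modified $\widetilde\alpha_{m,r}$ stay $\ll 1$, whereas you absorb $\psi_h(m)$ into $\gamma_h$ and track $\|\gamma_h\|_2$ and $\|\gamma_h\|_\infty$ separately, which is equally valid since Corollary~\ref{cor:bilinear} imposes no normalization on the weights.
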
 

\begin{proof}
Using  the standard completing technique, see~\cite[Section~12.2]{IwKow}, and~\cite[Bound~(8.6)]{IwKow} it follows that
$$
\widetilde S =  \sum_{- N/2<r\le  N/2} \frac{1}{|r|+1}
 \sum_{m= L+1}^{L+M}  \sum_{n =1}^N \widetilde\alpha_{m,r}  \widetilde \beta_{n,r}
  \e_{q^\gamma}(zg^{mn}),
$$
where 
$$\widetilde\alpha_{m,r} =  \alpha_m \eta_{m,r} \mand  \widetilde \beta_{n,r}=  \beta_n \e_N(rn),
$$
for some complex number $\eta_{m,r}\ll 1.$
Applying Corollary~\ref{cor:bilinear} 
and noting that
$$
\sum_{- N/2<r\le  N/2} \frac{1}{|r|+1} \ll \log N,
$$
we derive 
$$
\widetilde S \ll 
  \left(NM^{1-c\rho^2}+N^{1/2}M\right) \log{M}\log{N}+ \left(1+\frac{M}{q^{2\gamma/65}}\right)Nq^{8G}\log{N},
$$
which completes the proof.
\end{proof}

\subsection{Bounds on double   exponential sums over hyperbolic domains}

One of our main technical tool is the following result, which gives a 
  bound on double exponential sums over   certain ``hyperbolic''
regions of summation.

We recall the definition of $G$, given in~\eqref{eq:def tau G}.

\begin{lemma}
\label{lem:DoubleSumHyperb} 
Let $\gamma\in\N$ with $\gamma>16G$ and $A>0$.
Given real numbers $X,Y,Z\ge 1$ with
$$
Z  <Y\le q^{A\gamma}, 
$$
and a sequence $\bbeta = (\beta_n)_{n\le X/Z}$
 of complex numbers with 
 $$
 \|\bbeta\|_\infty \le 1
 $$
any sequences
$$
(K_m)_{m=1}^M \mand (N_m)_{m=1}^M 
$$ 
of nonnegative integers such that $K_m<N_m \leq X/m$ for each $m$,
and any integer $z$ coprime to $q$, we have
\begin{align*}
\sum_{Z< m \le Y}& \left| \sum_{K_m \leq n \leq N_m}
\beta_n \e_{q^\gamma}(zg^{mn})\right| \\
&  \ll  \left(XZ^{-c\zeta^2} + (YX)^{1/2}\right) (\log X)^2  +  \left(\frac{1}{Z}+\frac{1}{q^{2\gamma/65}}\right)Xq^{8G}\log{X},
\end{align*}  
where 
\begin{align}
\label{eq:zetadef}
\zeta=\frac{\log Z}{\log q^\gamma}
\end{align}
and $c> 0$ is a constant  depending only $A$. 
\end{lemma}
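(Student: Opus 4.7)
The plan is to dyadically decompose the outer range $(Z,Y]$ of $m$ and, on each dyadic piece, linearise the absolute values via unimodular weights, so that Lemma~\ref{lem:DoubleExpVarLim} applies directly.

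First I would partition $(Z,Y]$ into $O(\log(Y/Z))$ dyadic intervals $\cI_j=(M_j,2M_j]\cap(Z,Y]$ with $M_j=2^jZ$ for $j=0,1,\ldots$. For each $j$, choose complex weights $\eta_m$ with $|\eta_m|=1$ realising the absolute values, so that
\[
\sum_{m\in\cI_j}\left|\sum_{K_m\le n\le N_m}\beta_n\e_{q^\gamma}(zg^{mn})\right|=\sum_{m\in\cI_j}\eta_m\sum_{K_m\le n\le N_m}\beta_n\e_{q^\gamma}(zg^{mn}).
\]
This puts each dyadic contribution in exactly the form handled by Lemma~\ref{lem:DoubleExpVarLim}, with outer length $M\asymp M_j$, inner length bounded by $X/M_j$ (using $N_m\le X/m\le X/M_j$ on $\cI_j$), unit-modulus coefficients $\alpha_m:=\eta_m$, and $\bbeta=(\beta_n)$.

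Invoking Lemma~\ref{lem:DoubleExpVarLim} on the $j$-th dyadic piece produces a bound
\[
\Sigma_j\ll \left(XM_j^{-c\rho_j^2}+(XM_j)^{1/2}\right)(\log X)^2+\left(1+\frac{M_j}{q^{2\gamma/65}}\right)\frac{X}{M_j}q^{8G}\log X,
\]
where $\rho_j:=\log M_j/\log q^\gamma$. Summing these estimates over the $O(\log Y)$ dyadic scales yields the claim. The key observation for the leading term is that $M^{\rho_M^2}=q^{\gamma\rho_M^3}$ is monotone increasing in $M$, so $M_j^{-c\rho_j^2}\le Z^{-c\zeta^2}$ for every $j$, and the endpoint $M_j=Z$ dominates the whole dyadic sum; any logarithmic loss from the union over $j$ is absorbed by a slight decrease of the constant $c$. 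The second term is maximised at $M_j\asymp Y$ and contributes $(XY)^{1/2}(\log X)^2$. The error terms sum geometrically through $\sum_j M_j^{-1}\ll Z^{-1}$ and $\sum_j 1\ll \log X$, producing the claimed $(Z^{-1}+q^{-2\gamma/65})Xq^{8G}\log X$.

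The main obstacle is ensuring that dyadic summation does not spoil the leading saving in the first term; this is exactly what the monotonicity of $M\mapsto q^{\gamma\rho_M^3}$ achieves, by making the endpoint $M=Z$ dominant. Everything else is routine: linearising absolute values by unimodular weights, tracking the two logarithmic factors $\log M\log N$ from Lemma~\ref{lem:DoubleExpVarLim}, and summing geometric and logarithmic series in $j$.
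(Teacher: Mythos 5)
Your proposal is correct and follows essentially the same route as the paper: linearise the absolute values with unimodular weights, decompose $(Z,Y]$ dyadically (the paper uses $e$-adic blocks $(e^j,e^{j+1}]$ and the uniform lower bound $\rho_j\ge\zeta/2$ to get geometric decay in $j$, whereas you use the monotonicity of $M\mapsto M^{\rho_M^2}=q^{\gamma\rho_M^3}$ plus a union-over-scales bound with log absorption, but these are just two ways to sum the same series), and apply Lemma~\ref{lem:DoubleExpVarLim} to each block. The only point worth making explicit in your absorption step is that when $Z^{c\zeta^2/2}<\log X$ the claimed estimate already exceeds the trivial bound $\ll X\log X$ (noting that the inner sums vanish for $m>X$, so effectively $Y\le X$), so the extra $\log X$ from the union over $O(\log X)$ scales can indeed be discarded by shrinking $c$.
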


\begin{proof}
Clearly there are complex numbers $\alpha_m$  such that 
$|\alpha_m| = 1$ for $Z< m \le Y$ and 
 $\alpha_m=0$ otherwise, such that 
 $$
 \sum_{Z< m \le Y} \left| \sum_{K_m \leq n \leq N_m}
\beta_n \e_{q^\gamma}(zg^{mn})\right|
= \sum_{Z< m \le Y} \alpha_m  \sum_{K_m \leq n \leq N_m}
\beta_n \e_{q^\gamma}(zg^{mn}) .
$$ 

Furthermore 
\begin{align*}
\sum_{Z< m \le Y} & \alpha_m  \sum_{K_m \leq n \leq N_m}
\beta_n \e_{q^\gamma}(zg^{mn}) 
\\ & \qquad =
\sum_{\log Z-1\le j\le\log Y}\sum_{e^j < m\le e^{j+1}}
 \sum_{K_m \leq n \leq N_m} \alpha_m 
\beta_n \e_{q^\gamma}(zg^{mn})
\end{align*}
 and we have set $\alpha_m=0$ if $m\le Z$ or $m\geq Y$. 
We observe that for each $j$ within the summation range, we have 
$$
\frac{\log  (e^{j+1}-e^j)}{\log q^\gamma} \ge \frac{\log (Z-1)}{\log q^\gamma} \ge \frac{\zeta}{2},
$$
where $\zeta$ is given by~\eqref{eq:zetadef}. Hence
\begin{align*}
\sum_{e^j < m\le e^{j+1}}&
 \sum_{K_m \leq n \leq N_m} \alpha_m 
\beta_n \e_{q^\gamma}(zg^{mn})  \\
&\quad\ll  \left(\frac{X}{e^j} e^{j(1-c\zeta^2/4)}+e^{j}\left(\frac{X}{e^{j}}\right)^{1/2}  \right)(\log X)^2 \\ & \quad \quad \quad \quad \quad + \left(1+\frac{2^{j}}{q^{2\gamma/65}}\right)\frac{X}{2^j}q^{8G}\log{N}
\end{align*} 
by Lemma~\ref{lem:DoubleExpVarLim} 
and the result follows after renaming $c$, summing the above over $j$ satisfying $\log{Z}-1\le j \le \log{Y}$ and using the estimates 
$$
\sum_{\log Z\le j\le\log Y} e^{-\alpha j}\ll Z^{-\alpha} \mand \sum_{\log Z\le j\le\log Y}e^{j \alpha}\ll Y^{\alpha}.
$$ 
provided $\alpha>0$ is bounded away from $0$. 
\end{proof}

\subsection{Bounds on single exponential sums}
Combining Proposition~\ref{lem:bilinear} with Lemma~\ref{lem:sum-convert} allows us to estimate sums over an interval which has previously been considered by Korobov~\cite[Theorem~4]{Kor}. We present a proof for completeness.

\begin{lemma}
\label{lem:singlesum1}
With notation as in~\eqref{eq:def tau G} and Proposition~\ref{lem:bilinear} suppose $M$  satisfies
$$M\le q^{2\gamma/65}.$$
Then we have 
$$
\sum_{m=K+1}^{K+M}\e_{q^{\gamma}}(zg^m)\ll M^{1-10^{-11}\rho^2}  (\log{M})^2 +M^{10^{-10}\rho^2}q^{8G} \log M,
$$
where 
$$
\rho=\frac{\log{M}}{\log{q^{\gamma}}}.
$$
\end{lemma}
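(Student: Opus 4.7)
The strategy is to convert the single sum into a bilinear sum whose exponent is a \emph{product} of two new variables, so that Proposition~\ref{lem:bilinear} can be applied. The bridge is Lemma~\ref{lem:sum-convert}, which replaces $\sum_{m\in\cI}\e(f(m))$ by an averaged triple sum $\tfrac{\log N}{UV}\sum_{x\in\cJ}\sum_{u\le U}|\sum_{v\le V}\e(f(x+uv)+\alpha v)|$. With $f(m)=zg^m/q^\gamma$, the identity $g^{x+uv}=g^x g^{uv}$ turns the inner sum (after inserting unimodular weights $\eta_u$ to strip the absolute value bars) into the bilinear form $\sum_{u\le U}\sum_{v\le V}\eta_u \e(\alpha v)\e_{q^\gamma}(zg^x\cdot g^{uv})$, whose exponent sits in the product variable $uv$ exactly as demanded by Proposition~\ref{lem:bilinear}.

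Concretely, I would apply Lemma~\ref{lem:sum-convert} to the interval $\cI=\{K+1,\ldots,K+M\}$ with parameters $U,V$ satisfying $UV\le M/2$, and then, for each fixed $x\in \cJ$, apply Proposition~\ref{lem:bilinear} with modular coefficient $zg^x$ and weights $\balpha=(\eta_u)$ (so $\|\balpha\|_2=\sqrt U$, $\|\balpha\|_\infty=1$) and $\bbeta=(\e(\alpha v))$ (so $\|\bbeta\|_\infty=1$). This yields, uniformly in $x$, a bound of the shape
$$
U^{1-10^{-10}\rho_U^{2}}V\log U+UV^{1/2}+Vq^{8G},
$$
where $\rho_U=\log U/\log q^\gamma$. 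Summing over $x$ (an interval of length $\asymp M$) and attaching the $\log M/(UV)$ prefactor from Lemma~\ref{lem:sum-convert} gives
$$
M\log M\,\bigl(U^{-10^{-10}\rho_U^{2}}\log U+V^{-1/2}+q^{8G}/U\bigr).
$$
The hypothesis $M\le q^{2\gamma/65}$ automatically forces $U\le q^{2\gamma/65}$, so Proposition~\ref{lem:bilinear} is applicable.

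The remaining task is to balance $U$ and $V$. Taking $U=M^a$ with $a$ very close to $1$, specifically $a=1-10^{-10}\rho^2$, and $V\asymp M^{1-a}$, one has $\rho_U\approx \rho$, so the first contribution saves a factor of order $M^{-\Omega(\rho^2)}$; the third piece $q^{8G}/U=M^{1-a}q^{8G}$ becomes exactly the $M^{10^{-10}\rho^2}q^{8G}\log M$ term in the statement; and the middle $V^{-1/2}$ is just small enough to be absorbed into the first, producing the advertised $M^{1-10^{-11}\rho^2}(\log M)^2$. The main technical obstacle is precisely this Cauchy--Schwarz residue $V^{-1/2}$, which is a vestige of the squaring step inside Proposition~\ref{lem:bilinear}: it forces $V$ to be at least $M^{\Omega(\rho^2)}$, thereby limiting how close $a$ can be pushed to $1$. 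The balancing closes only because the exponent saving in the first term degrades like $a^3\rho^2$ rather than $a\rho^2$, leaving a sufficiently wide window for $a$; beyond this point everything is routine bookkeeping.
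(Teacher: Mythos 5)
Your proposal follows essentially the same route as the paper: convert via Lemma~\ref{lem:sum-convert} (with unimodular weights $\eta_u$ to strip absolute values), observe $g^{x+uv}=g^x g^{uv}$ so that the inner sum acquires the product structure required by Proposition~\ref{lem:bilinear}, apply the Proposition with $z_0=zg^x$ and $\|\balpha\|_2\asymp\sqrt U$, and balance with $U=M^{1-10^{-10}\rho^2}$, $V=M^{10^{-10}\rho^2}$. The paper takes a maximum over the outer variable while you sum, but since the outer interval has length $\asymp M$ these are interchangeable. One small inaccuracy in your narrative: the reason the balancing closes is not that the saving "degrades like $a^3\rho^2$ rather than $a\rho^2$" (both are $\approx\rho^2$ since $a\approx 1$); it closes simply because $a=1-10^{-10}\rho^2$ is so close to $1$ (recall $\rho\le 2/65$, so $a^3\ge 1/10$ with huge margin) that the first term still saves $\gtrsim 10^{-11}\rho^2$ while the middle term $M^{1-(1-a)/2}=M^{1-5\cdot10^{-11}\rho^2}$ also clears the threshold — this is exactly the paper's remark that $(1-10^{-10}\rho^2)^2\ge 1/10$. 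Otherwise the proof is correct.
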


\begin{proof}
Let 
$$S=\sum_{m=K+1}^{K+M}\e_{q^{\gamma}}(zg^m),$$
and apply Lemma~\ref{lem:sum-convert} with 
$$
U=M^{1-10^{-10}\rho^2}, \quad V=M^{10^{-10}\rho^2},
$$
to get 
$$
S\ll \frac{\log{N}}{M}\sum_{m=K+1}^{K+M}\sum_{u\le U}\left|\sum_{v\le V}\e(\alpha v)\e_{q^{\gamma}}(zg^{m}g^{uv})\right|.
$$

Taking a maximum over $m$ in the above, we get 
$$
S\ll \log{M}\sum_{u\le U}\sum_{v\le V}\alpha(u)\beta(v)\e_{q^{\gamma}}(z_0g^{uv}),
$$
for some $\gcd(z_0,p)=1$ and complex numbers $\alpha,\beta$ satisfying 
$$|
\alpha(u)|, |\beta(v)|\le 1.
$$
With 
$$
\rho_0=\frac{\log{U}}{\log{q^{\gamma}}},
$$
we have 
$$
\rho_0= \rho\left(1-10^{-10}\rho^2\right),
$$
hence by Proposition~\ref{lem:bilinear}
\begin{align*}
S &\ll (\log{M})^2\left(V(U^{1-10^{-10}\rho_0^2}+q^{8G})+U^{1/2}V^{1/2}\right)
\\ &\ll (\log{M})^2M\left( M^{-10^{-10}\rho^2(1-10^{-10}\rho^2)^2}+M^{-\frac{1}{2}-\frac{1}{2}10^{-10}\rho^2} \right).
\end{align*}
Note the assumption 
$$M\le q^{2\gamma/65},$$
implies that 
$$\rho\le \frac{2}{65},$$
and hence 
$$
(1-10^{-10}\rho^2)^2\geq \left(1-10^{-10}\left(\frac{2}{65}\right)^2\right)^2\geq \frac{1}{10}
$$
which completes the proof. 
\end{proof}

Partitioning the summation into small intervals as in the proof of Corollary~\ref{cor:bilinear} allows us again to remove the restriction $M\le q^{2\gamma/65}$ in Lemma~\ref{lem:singlesum1}.

\begin{cor}
\label{cor:singlesum1}
With notation as in~\eqref{eq:def tau G} and Proposition~\ref{lem:bilinear} suppose $M$  satisfies
$$M\le q^{A\gamma}.$$
Then we have
$$
\sum_{m=K+1}^{K+M} \e_{q^{\gamma}}(zg^m) \ll M^{1-c\rho^2}  \log{M} +M^{1-c}q^{8G},
$$
where 
$$
\rho=\frac{\log{M}}{\log{q^{\gamma}}}
$$
and $c> 0$ is a constant  depending only $A$. 
\end{cor}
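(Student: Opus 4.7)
The plan is to mimic exactly the reduction already used in the proof of Corollary~\ref{cor:bilinear}: partition the interval of summation into short sub-intervals on which Lemma~\ref{lem:singlesum1} can be applied directly. If $M \le q^{2\gamma/65}$, Lemma~\ref{lem:singlesum1} gives the stated bound immediately, after absorbing the extra $\log M$ factor and the various numerical constants in the exponents into a slightly smaller $c$. Hence I need only treat the case $M > q^{2\gamma/65}$.

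In that case set $M_0 = \lfloor q^{2\gamma/65}\rfloor$ and split $\{K+1,\ldots,K+M\}$ into $J = \lceil M/M_0\rceil$ consecutive blocks of length at most $M_0$. Applying Lemma~\ref{lem:singlesum1} to each block with the local parameter $\rho_0 = \log M_0/\log q^\gamma \asymp 2/65$, and summing via the triangle inequality, will yield
\[
\left|\sum_{m=K+1}^{K+M}\e_{q^\gamma}(zg^m)\right| \ll J\bigl(M_0^{1-10^{-11}\rho_0^2}(\log M_0)^2 + M_0^{10^{-10}\rho_0^2}q^{8G}\log M_0\bigr).
\]
Since $JM_0 \ll M$, the right-hand side simplifies to
\[
M \cdot M_0^{-10^{-11}\rho_0^2}(\log M)^2 \ + \ (M/M_0)\,M_0^{10^{-10}\rho_0^2}q^{8G}\log M.
\]

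It then remains to convert the constant exponent $\rho_0 \asymp 2/65$ into one involving $\rho = \log M/\log q^\gamma$. The key identity is $M^{c\rho^2} = (q^\gamma)^{c\rho^3}$ and the analogous formula for $M_0$; combined with the hypothesis $\rho \le A$ (coming from $M \le q^{A\gamma}$), this gives $M_0^{10^{-11}\rho_0^2} \ge M^{c\rho^2}$ for some $c$ depending only on $A$ (essentially $c \asymp A^{-3}$), which handles the first term. For the second term, the exponent $-1+10^{-10}\rho_0^2$ is bounded below by an absolute positive constant, so $M_0^{-1+10^{-10}\rho_0^2}$ is a genuine negative power of $q^\gamma$; under $M \le q^{A\gamma}$ this becomes a negative power $M^{-c/A}$ of $M$, producing the $M^{1-c}q^{8G}$ term after absorbing the $\log M$ by a further small decrease of $c$. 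The only mild obstacle is the constant-juggling needed to use a single $c = c(A)$ throughout both terms; no new ideas beyond those in Corollary~\ref{cor:bilinear} are required.
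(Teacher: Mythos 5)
Your proof follows the paper's own route essentially verbatim: partition into blocks of length $\lfloor q^{2\gamma/65}\rfloor$, apply Lemma~\ref{lem:singlesum1} to each block, sum, and convert the constant $\rho_0\asymp 2/65$ into the global $\rho$ using $M\le q^{A\gamma}$. The one imprecision is the $\log$-absorption in the case $M\le q^{2\gamma/65}$: there $\rho$ can be arbitrarily small, so one cannot trade the $(\log M)^2$ of Lemma~\ref{lem:singlesum1} for a single $\log M$ merely by shrinking $c$; the paper instead notes that the bound $M^{1-c\rho^2}(\log M)^2$ is trivial unless $M^{c\rho^2}\ge(\log M)^2$, in which case $M^{-c\rho^2}(\log M)^2\le M^{-c\rho^2/2}\log M$, and this trick (not a simple decrease of $c$) is what actually removes the extra logarithm.
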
  

\begin{proof}
Arguing as in the proof of Corollary~\ref{cor:bilinear}, we may partition the summation over $m$ into intervals of length at most $q^{2\gamma/65}$ and apply Lemma~\ref{lem:singlesum1} to each of these intervals. This produces a bound of the form 
\begin{equation}
\label{eq:b112}
\sum_{m=K+1}^{K+M} \e_{q^{\gamma}}(zg^m) \ll M^{1-c\rho^2}  (\log{M})^2 +M^{1-c}q^{8G} \log M,
\end{equation}
for a constant $c$ depending on $A$. Unless  we have $M^{c\rho^2} \ge  (\log{M})^2$ the estimate~\eqref{eq:b112}  is trivial.   Under this condition we have 
$$
M^{-c\rho^2}   (\log{M})^2 \le \sqrt{M^{-c\rho^2}  (\log{M})^2},
$$
which allows us to replace   $(\log{M})^2$ with $\log{M}$ after changing the constant $c> 0$.
Reducing $c$ if necessary, we can also discard  $\log{M}$ in the second term. 
\end{proof}

\section{Proofs of Main Results} 

\subsection{Proof of Theorem~\ref{thm:main}}

We apply Lemma~\ref{lem:Vaughan's identity} with 
\begin{equation}
\label{eq:mainUVchoice}
U=X^{1/4}, \quad V=X^{1/4},
\end{equation} 
 to get 
\begin{equation}
\label{eq:Vaughan}
S_{q^{\gamma}}(a;X) \ll X^{1/4}+\Sigma_1(\log{X})+\Sigma_2^{1/2}X^{1/2}(\log{X})^3,
\end{equation}
where 
$$
\Sigma_1=\sum_{t\le UV}\max_{w\le X/t}\left|\sum_{w\le m \le X/t}\e_{q^{\gamma}}(ag^{tm})\right|,
$$ 
 and 
$$
\Sigma_2=\max_{U \le w \le X/V} \max_{V \le j \le X/w} \sum_{V < m \le X/w} 
 \left|\sum_{\substack{w < n \le 2w\\ n \leq X/m\\ n \le X/j}} \alpha_n \e_{q^{\gamma}}(ag^{mn})\right|,
$$
for some $|\alpha_n|\le 1$. Considering $\Sigma_1$, for each fixed $t\le UV=X^{1/2}$, define 
$$
G_t=\nu_q(g^{t \ord_q(g^t)}-1)
$$
and
$$
\rho_t=\frac{\log{(X/t)}}{\log{q^{\gamma}}}.
$$
By~\eqref{eq:mainUVchoice} and $t \le UV = X^{1/2}$ we have  
\begin{equation}
\label{eq:rho111}
\rho_t\ge \frac{\rho}{2}.
\end{equation}

We claim, that the following inequality holds
\begin{equation}
  \begin{split}
\label{eq:sigma1nontrivial} 
\max_{w\le X/t} \left|\sum_{w\le m \le X/t}\e_{q^{\gamma}}(ag^{tm})\right| & \\
\ll\left(\frac{X}{t}\right)^{1-c\rho_t^2}  & \log{X}
    + \left(\frac{X}{t}\right)^{1-c}q^{8G_t}.
  \end{split}
\end{equation}  
Indeed, if $\gamma > 16 G_t$ it follows from Corollary~\ref{cor:singlesum1}.

If $\gamma \le  16 G_t$ then 
$$
 \left(\frac{X}{t}\right)^{1-c}q^{8G_t} \ge  \left(\frac{X}{t}\right)^{1-c}q^{\gamma/2} \ge  \left(\frac{X}{t}\right)
$$
so~\eqref{eq:sigma1nontrivial} is trivially true as well
since 
\begin{equation}
\label{eq:sigma1trivial}
 \max_{w\le X/t}\left|\sum_{w\le m \le X/t}\e_{q^{\gamma}}(ag^{tm})\right|\ll \frac{X}{t},
\end{equation}
which proves \eqref{eq:sigma1nontrivial}.

Summing \eqref{eq:sigma1nontrivial} over $t\le UV$ and using~\eqref{eq:sigma1nontrivial}, \eqref{eq:rho111} and~\eqref{eq:sigma1trivial} gives 
\be
\label{eq:sigma1sigma}
\Sigma_1\ll  \sum_{t\le X^{1/2}}\left(\frac{X}{t}\right)^{1-c\rho_t^2} \log X+\widetilde \Sigma_1,
\ee
where 
$$
\widetilde \Sigma_1=\sum_{t\le X^{1/2}}\min\left\{\frac{X}{t},\, \left(\frac{X}{t}\right)^{1-c}q^{8G_t}\right\}.
$$

For $t \le X^{1/2}$  we have  $\(X/t\)^{1-c\rho_t^2/4} \le   X^{1-c\rho^2/8} t^{-1}$, thus
\begin{align*}
\sum_{t\le X^{1/2}}\left(\frac{X}{t}\right)^{1-c\rho_t^2}&
\le \sum_{t\le X^{1/2}}\left(\frac{X}{t}\right)^{1-c\rho^2/4}\ll   X^{1-c\rho^2/8} \log X.
\end{align*}

This, together with~\eqref{eq:sigma1sigma}, implies 
\begin{equation}
\label{eq:Sigma1step2}
\Sigma_1\ll X^{1-c\rho^2/8} \log X  
+\widetilde \Sigma_1.
\end{equation}

Considering $\widetilde \Sigma_1$,
we  partition summation over $t$ into dyadic intervals to obtain 
\begin{align*}
\widetilde \Sigma_1
&   \ll \sum_{k \leq \frac{\log X}{2\log 2}}\sum_{2^k\leq t < 2^{k+1}}
\min\left\{\frac{X}{t},\, \left(\frac{X}{t}\right)^{1-c}q^{8G_t}\right\} \\
&   \ll \sum_{k \leq \frac{\log X}{2\log 2}}\sum_{2^k\leq t < 2^{k+1}}
\min\left\{\frac{X}{2^k},\, \left(\frac{X}{2^k}\right)^{1-c}q^{8G_t}\right\} .
\end{align*}
Let $k_0$ be such an index with $k_0 \leq (\log X)/(2\log 2)$ that the maximum of the inner sums over $t$ is attained and write 
$$
Z=\frac{X}{2^{k_0}}.
$$
Then
$$X^{1/2}\le Z\le X,$$
and
$$
\widetilde \Sigma_1\ll (\log{X})\sum_{X/Z\le t \le 2X/Z}\min\left\{Z,\, Z^{1-c}q^{8G_t}\right\}.
$$ 
Recalling the definition of $G$, given by~\eqref{eq:def tau G}, we see that 
$$
\ord_q(g^t)=\frac{\tau}{\gcd(\tau,t)},
$$
and by Lemma~\ref{lem:frog2}, used with $m=1$, $x =\tau t/\gcd(\tau,t)$ and $y=0$, 
$$
G_t=\nu_q\(g^{\tau t/\gcd(\tau,t)}-1\)=G+\nu_q(t).
$$

As $g$ and $q$ are  fixed, $G=O(1)$
and hence 
$$
\widetilde \Sigma_1\ll  \log{X} \sum_{X/Z\le t \le 2X/Z}\min\left\{Z,Z^{1-c}q^{8\nu_q(t)}\right\}.
$$ 
For $O(XZ^{-1-c/9})$ values of $t\le 2X/Z$ with $q^{\nu_q(t)}> Z^{c/9}$ we use 
$$\min\left\{Z,Z^{1-c}q^{8\nu_q(t)}\right\}\le Z.$$
Their total contribution is $O\(XZ^{-c/9}\)$. For the remaining values of $t$ we 
$$\min\left\{Z,Z^{1-c}q^{8\nu_q(t)}\right\}\le Z^{1-c+8c/9} = Z^{1-c/9},
$$
which gives the same total contribution $O\(XZ^{-c/9}\)$.
Hence, recalling $Z\ge X^{1/2}$,  we obtain 
$$
\widetilde \Sigma_1 \ll XZ^{-c/9} \log X \le X^{1-c/18} \log X.
$$
 Using the above in~\eqref{eq:Sigma1step2} gives 
\begin{equation}
\label{eq:Sigma1final}
\Sigma_1 \ll X^{1-c\rho^2/8} \log X 
+ X^{1-c/18}  \log{X} 
\ll  X^{1-\delta(A) \rho^2} (\log{X})^2,
\end{equation} 
for some  constant $\delta(A)>0$ that depends only on $A$.  

To estimate $\Sigma_2$ we apply Lemma~\ref{lem:DoubleSumHyperb} to get 

$$
\Sigma_2\ll \left(X^{1-\delta(A) \rho^2}+X^{7/8}+\frac{X}{q^{2\gamma/65}}\right) (\log{X})^2,
$$
for a suitably reduced $\delta(A)$ if necessary.
By the above bounds~\eqref{eq:Vaughan} and~\eqref{eq:Sigma1final}
$$
S_{q^{\gamma}}(a;X) \ll X^{1-\delta(A) \rho^2} (\log{X})^3+\frac{X}{q^{2\gamma/65}}(\log{X})^4.
$$
Now, using the same argument as in the proof of Corollary~\ref{cor:singlesum1}, and reducing $\delta(A)$ 
if necessary, we see that we can replace $(\log{X})^3$ with $\log{X}$ (or any other power of $\log X$) 
in the first term,  and also discard  completely  $(\log{X})^4$ 
in the second term.

\subsection{Proof of Theorem~\ref{thm:MersDig}}  
We observe that the property of having $\sigma$ on positions $r, \ldots, r-s+1$ of $M_p$ 
is equivalent to  the property of the fractional part of $M_p/q^{r+1}$ falling in 
a prescribed half-open interval of length $1/q^s$, namely, to 
\begin{equation}
\label{eq:FracPart}
\left\{\frac{M_p}{q^{r+1}}\right\} \in \left[\frac{\overline \sigma}{q^s},  \frac{\overline \sigma + 1}{q^s}\), 
\end{equation}
(we recall that  the numbering starts from zero) where 
$$
\overline \sigma = \sum_{i=0}^{s-1} a_i q^i
$$
is the integer which $q$-ary digits are given by $\sigma$.  We now combine the bound of 
Corollary~\ref{cor:Mers}  
with the   \emph{Erd{\H o}s--Tur{\'a}n inequality} (see~\cite[Theorem~1.21]{DrTi}), which gives a bound of the discrepancy 
via exponential sums, and conclude that for any integer parameter $H\ge 1$
$$
A_r(X,\sigma)  - q^{-s} \pi(X) 
 \ll  
 \pi(X) H^{-1} + \sum_{h=1}^H \frac{1}{h}
\left|\ssum{p\le X\\p\text{~prime}}\e_{q^{r+1}}(hM_p)\right|.
$$ 
We now set 
$$
H =  \fl{X^{\varepsilon/2} }.
$$
Below we use very crude bounds, many of them can be done in a  more refined way, 
however this does not improve the final result.

Namely, for any positive integer $h \le H$, writing
 $$
 q^\gamma = \frac{q^{r+1}} {\gcd(h, q^{r+1})},
 $$
since $r \ge \varepsilon \log X$,  we see that  
 \begin{equation}
\label{eq:XXv1}
q^{\gamma} \ge q^{r+1}/H \geq e^{r}/H  \ge X^{\varepsilon/2}. 
\end{equation} 

We now use Corollary~\ref{cor:Mers} with $A=2/\varepsilon$   and note by~\eqref{eq:XXv1} the condition~\eqref{eq:mainNcond} is satisfied. This implies that~\eqref{eq:FracPart} happens for 
 \begin{equation}
\label{eq:Penult}
\begin{split}
A_r(X,\sigma) = q^{-s} & \pi(X)\\
& + O\(X^{1-\varepsilon/2} +  X^{1- c  \varrho^2}\log{X} +  X q^{-c r} \log X\)
\end{split} 
\end{equation}  
primes $p \le X$, where 
$$
\varrho = \frac{\log X}{\log 
q^{r+1}}.
$$
and $c>0$ is some constant that depends on $\varepsilon$ and $q$. 

Using that $r \le (\log  X)^{3/2-\varepsilon}$ we obtain  
$\varrho \ge(\log  X)^{-1/2+\varepsilon/2}$.  
Thus 
$$ 
X^{1- c \varrho^2}\log{X} \le X \exp\(- c(\log  X)^{\varepsilon}\)\log{X}.
$$
We also have
$$
 X q^{-c  r}\le X^{1-c \varepsilon}
 $$
and then~\eqref{eq:Penult} implies 
$$
A_r(X,\sigma) = q^{-s}  \pi(X)
+ O\(  X \exp\(- 0.5 c(\log  X)^{\varepsilon}\)\)
$$ 
which concludes the proof.

\section*{Acknowledgement} 
The authors would like to thank Bill Banks for many useful discussions and for his contribution to an early version of the paper. The authors also would like to thank Olivier Bordelles for his interest and very important comments and suggestions. 

The authors are grateful to the anonymous referees for the very careful reading of the manuscript 
and very useful comments.

During the preparation of this work B.K. was supported by ARC Grant~DP160100932, L.M. was supported by the
Austrian Science Fund Project P31762, I.S. was  supported  by ARC Grant DP170100786.

\end{document}